\newtheorem{theorem}{Theorem}
\newtheorem{lemma}{Lemma}
\newtheorem{corollary}{Corollary}
\newcommand{\comment}[1]{\ifthenelse{\boolean{showcomments}}
{\textcolor{Green}{(Comment: #1)}}{}}
\newcommand{\emma}[1]{  \ifthenelse{\boolean{showcomments}}
{\textcolor{Green}{Emma:  #1}}{}}
\newcommand{\hassan}[1]{\ifthenelse{\boolean{showcomments}}
{\textcolor{SkyBlue}{Hassan: #1}}{}}
\newcommand{\tim}[1]{\ifthenelse{\boolean{showcomments}}
{\textcolor{VioletRed}{Tim: #1}}{}}
\newcommand{\nithin}[1]{\ifthenelse{\boolean{showcomments}}
{\textcolor{Blue}{Nithin: #1}}{}}
\newcommand{\louis}[1]{\ifthenelse{\boolean{showcomments}}
{\textcolor{Orange}{Louis: #1}}{}}
\newcommand{\igor}[1]{\ifthenelse{\boolean{showcomments}}
{\textcolor{Red}{Igor: #1}}{}}
\newcommand{\newtext}[1]{\ifthenelse{\boolean{shownew}}
{\textcolor{Red}{#1}}{}}
\newcommand{\bx}{\mathbf{x}}
\newcommand{\Koop}{\mathcal{U}}
\newcolumntype{L}[1]{>{\raggedright\let\newline\\\arraybackslash\hspace{0pt}}m{#1}}
\newcolumntype{C}[1]{>{\centering\let\newline\\\arraybackslash\hspace{0pt}}m{#1}}
\newcolumntype{R}[1]{>{\raggedleft\let\newline\\\arraybackslash\hspace{0pt}}m{#1}}
\begin{document}

\title{\LARGE \bf
	An operator-theoretic viewpoint to non-smooth dynamical systems: Koopman analysis of a hybrid pendulum}
\author{Nithin Govindarajan, Hassan Arbabi, Louis van Blargian, \\ Timothy Matchen, Emma Tegling and  Igor Mezi\'c %
	\thanks{ N. Govindarajan, H. Arbabi, L. van Blargian and T. Matchen are graduate students at Mechanical Engineering, University of California, Santa Barbara, Santa Barbara, CA 93106-5070, USA. }
	\thanks{ E. Tegling is with the School of Electrical Engineering, KTH Royal Institute of Technology, SE-100 44 Stockholm, Sweden. }
	\thanks{I. Mezi\'c is with the Faculty of Mechanical Engineering, University of California, Santa Barbara, Santa Barbara, CA 93106-5070, USA.}
}%
\maketitle

\begin{abstract}
	We apply an operator-theoretic viewpoint to a class of non-smooth dynamical systems that are exposed to event-triggered state resets. The considered benchmark problem is that of a pendulum which  receives a downward kick under certain fixed angles. The pendulum is modeled as a hybrid automaton and is analyzed from both a geometric perspective and the formalism carried out by Koopman operator theory. A connection is drawn between these two interpretations of a dynamical system by means of establishing a link between the spectral properties of the Koopman operator and the geometric properties in the state-space.
\end{abstract}
\section{Introduction}
\label{sec:intro}

A considerable number of dynamical systems in engineering practice are essentially hybrid in nature. These systems typically model non-smooth phenomena such as impact, collision, and switching between several discrete modes. Under the hybrid automaton framework, these discontinuities are often expressed in terms of guard conditions, state resets, and switching between several ``system modes''. The imposition of such conditions result in so-called piecewise-smooth dynamical systems of which the orbits are characterized by smooth evolutions, interrupted by discrete jumps. 

With the possibility of discontinuous orbits, it may not always be convenient to characterize the state-space geometry of a hybrid system in terms of its trajectories. A more general viewpoint to take is to consider the evolution of \emph{ensembles of initial conditions}, or sets being propagated under the flow. In this paper, we embrace this philosophy by viewing a special class of hybrid systems from an operator-theoretic point of view. In this approach, instead of focusing on trajectories, the evolution of functions defined on the state-space is considered. Our analysis is based-on the machinery of Koopman operator theory and looks at the so-called “dynamics of observables”. A remarkable feature to this approach is that the dynamics can be interpreted as linear in the space of observables, irrespective of the underlying properties of the dynamical system in the state-space. The Koopman (semi-)group is a one-parameter family of infinite-dimensional linear operators, and this  allows one to exploit the tools of spectral operator theory. Analysis of nonlinear flow fields through the spectral properties of the Koopman operator has already been carried out under various settings\cite{Budisic2012}. From an applied context, these approaches have been particularly useful in describing dynamically relevant modes in fluid flows \cite{Mezic2012,Rowley2009}, coherency in power systems \cite{Susuki2011}, and energy efficiency in buildings \cite{Georgescu2015}.

The purpose of this paper is to illustrate how Koopman analysis can also be applied to certain classes of hybrid systems which consist of a single discrete mode, but are subjected to guard conditions and state resets. The emphasis will be on a specific benchmark pendulum system that is subjected to downward ``kicks'' under fixed angles. For this particular example, we show how certain important geometric structures, pertinent to the underlying flow field, are recovered from the spectral properties of the operator.  Overall, two cases are considered: (i) the undamped case where the continuous part of the system is Hamiltonian, and (ii) a damped case where the pendulum is exposed to viscous damping. 

The paper is organized as follows. Section~\ref{sec:kooptheory} reviews the essentials of Koopman operator theory and introduces the specifics on the hybrid pendulum. Section~\ref{sec:mapping} analyzes the undamped case from a geometric perspective. Section~\ref{sec:numerical} then views the undamped system from the Koopman operator perspective. In sections~\ref{sec:damping} and \ref{sec:dampedkoopman}, the same are done respectively for the damped case. A reflection on the obtained results is covered in the conclusions. Proofs of certain theorems are included in the appendix.

\section{Preliminaries: Koopman operator theory and the ``hybrid pendulum''} \label{sec:kooptheory}

In section~\ref{sec:Koopman_intro} we review certain basics of Koopman operator theory \cite{Budisic2012, Mezic2005}. The details on the ``hybrid pendulum'' are covered in  section~\ref{sec:setup}.

\subsection{Koopman operator theory}
\label{sec:Koopman_intro}

Let $X\subset \mathbb{R}^N$ denote the state-space and $\boldsymbol{S}^t: X\mapsto X$ a flow map satisfying the (semi-)group properties: $\boldsymbol{S}^t\circ \boldsymbol{S}^s(\boldsymbol{x}) = \boldsymbol{S}^{t+s}(\boldsymbol{x})$, $\boldsymbol{S}^0(\boldsymbol{x})= \boldsymbol{x}$.  Now consider an observable $g: X\mapsto \mathbb{C}$, for some fixed $t\in \mathbb{R}$, the Koopman operator is defined by, 
\begin{equation}
\left[  \Koop^t g \right](\boldsymbol{x}) := g \circ \boldsymbol{S}^t (\boldsymbol{x}) \label{eq:koopdefcont}
\end{equation}
\eqref{eq:koopdefcont} permits an alternative representation of a dynamical system in which one looks at the dynamics of observables, i.e. the original system expressed by the tuple $(X, \boldsymbol{S}^t, t)$ can be alternatively represented by $(\mathcal{G}, \Koop^t, t)$, where  $\mathcal{G}$ denotes the space of observables. A remarkable aspect to this representation $(\mathcal{G}, \Koop^t, t)$ is that $\Koop^t$ is always a \emph{linear operator}, irrespective of the original properties in the state-space. This linear description of a nonlinear, and possibly, non-smooth system is obtained through ``lifting''  the dynamics on the state-space to a higher, infinite-dimensional space of functions. 

\subsubsection*{Koopman eigenfunctions/-distributions}
The linearity of $\Koop^t$ allows one to exploit the machinery from spectral operator theory. In particular we may define eigenfunctions for these operators. A nonzero function $\phi_{\lambda}\in\mathcal{G}$ is called a Koopman eigenfunction if it satisfies,
\begin{equation}
\left[ \Koop^t \phi_{\lambda}\right] (\boldsymbol{x}) = e^{\lambda t} \phi_{\lambda} (\boldsymbol{x})   \label{eq:conteigfunc}
\end{equation}
for some eigenvalue $\lambda \in \mathbb{C}$. We remark that, depending on what norm and measure is used on $X$, the expression $\phi_{\lambda} \ne 0$ must be interpreted in an almost everywhere sense, e.g.  for square-integrable functions with respect to measure $\mu$, we have the condition: $\left\| \phi_{\lambda} \right\|_{2,\mu}\ne 0$, where $\left\| v \right\|_{2,\mu} := ( \int_X |v(\boldsymbol{x})|^2 d\mu  )^{\frac{1}{2}}$. 

Given the infinite-dimensional nature of the operator, $\Koop^t$ may also contain continuous spectrum. In that case, one can extend the notion of eigenfunctions in an appropriate weak sense using the concept of distributions. These generalized objects, referred to as eigendistributions, satisfy the relation:
\begin{equation}
\int_X \left[\Koop^t \phi_{\lambda}\right] (\boldsymbol{x}) w(\boldsymbol{x}) d\mu = e^{\lambda t}  \int_X \phi_{\lambda}(\boldsymbol{x})  w(\boldsymbol{x})  d\mu \label{eq:eigenmeasure}
\end{equation}
where $w(\boldsymbol{x})$ is some arbitrary test function on $X$. Indeed, it follows that all eigenfunctions are eigendistributions, but the converse does not hold true. 

Eigenfunctions/-distributions are preserved under conjugacy. If $\mathbf{S}^t:X\mapsto X$, $\mathbf{R}^t: Y\mapsto Y$ are two topologically conjugate dynamical systems under the homeomorphism $\boldsymbol{h}: X \mapsto Y$, i.e. $ \boldsymbol{h} \circ \mathbf{S}^t(\boldsymbol{x}) = \mathbf{R}^t \circ \boldsymbol{h}(\boldsymbol{x})$, and if $\phi_{\lambda}$ is an eigenfunction/eigendistribution of $\Koop^t_{R}$, then so should  $\phi_{\lambda}\circ \boldsymbol{h} $ be an eigenfunction/-distribution of $\Koop^t_{S}$ \cite{Budisic2012}.  

Koopman eigenfunctions are directly related to the geometric state-space description of a dynamical system in the following sense. Let: 
$$\Psi^c_{\phi_{\lambda}} := \left\lbrace{\boldsymbol{x}\in X: \phi_{\lambda}(\boldsymbol{x}) = c} \right\rbrace$$
denote a specific level-set of an eigenfunction. Then the mapping of the set $\Psi^c_{\phi_{\lambda}}$ forward under the flow yields the relation:
\begin{equation} 
\boldsymbol{S}^t\left( \Psi^c_{\phi_{\lambda}}\right) = \Psi^{c \exp(\lambda t)}_{\phi_{\lambda}} \label{eq:levelsetprop}
\end{equation}
The interpretation of \eqref{eq:levelsetprop} is that the level-sets of $\phi_{\lambda}$ characterize how specific ensembles of initial conditions are propogated under the flow. It is exactly this specific property which allow us to analyze the geometric properties of the state-space from an operator theoretic context.

\subsubsection*{Projection operators}

Given an observable $g$, one may obtain the projection of this observable onto the fixed space (i.e. eigenspace at eigenvalue $\lambda=0$)through evaluating the infinite time averages of observable-traces:
\begin{equation}
g^*(\boldsymbol{x}) = \displaystyle \lim_{t\rightarrow \infty} \frac{1}{t} \int^t_0 \left[ \Koop^t g  \right] (\boldsymbol{x}) dt
\label{eq:avg}
\end{equation} 
By Birkhoff's ergodic theorem \cite{Petersen1989}, the integral \eqref{eq:avg} is known to converge a.e. for integrable functions with respect to the invariant measure of the system. Through adding a weighting term, one may obtain also  projections of $g$ onto eigenspaces other than zero. In general, we define a projection operator:
\begin{equation}
\left[ \mathcal{P}^{\lambda}g \right] (\boldsymbol{x}) = \displaystyle \lim_{t\rightarrow \infty} \frac{1}{t} \int^t_0 e^{-\lambda \tau} \left[ \Koop^{\tau} g  \right] (\boldsymbol{x}) d\tau \label{eq:laplace}
\end{equation} 
where the right-hand side of \eqref{eq:laplace} is generally known as the \emph{Laplace average} of $g$ \cite{Budisic2012}. One can verify through substitution that $\mathcal{P}^{\lambda}g$ is indeed an eigenfunction of Koopman at eigenvalue $\lambda$, provided the improper integral converges.

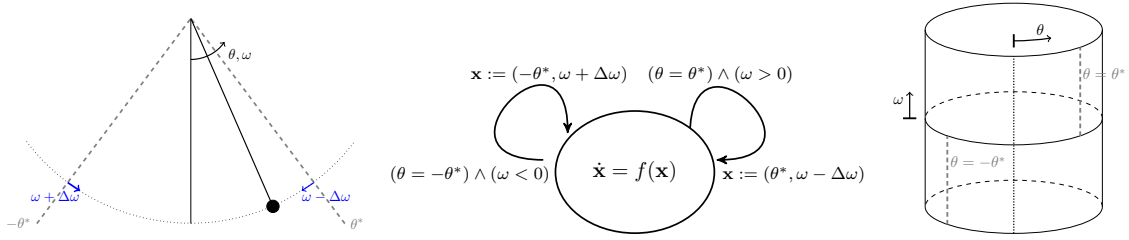
\begin{figure*}[t!]
	\centering
	\begin{adjustbox}{max width=0.3\textwidth}
		\begin{tikzpicture}[line/.style={thick},
thetaline/.style={very thick,style = dashed,color=gray},
  ball/.style={draw,circle,inner sep=0,minimum size=0.3cm,fill} ]

	\draw[thetaline] (1.25,0) -- (5,5) node[pos=0,left] {$-\theta^*$}; 
	\draw[thetaline] (8.75,0) -- (5,5) node[pos=0,right] {$\theta^*$};
		\draw[style = dotted] (1,2) arc (217:323:5) ;
	\draw (5,5) -- (5,0) ;
	\draw[line] (7,0.42) -- (5,5) ;
	\node[ball] at (7,0.42) {};
	\draw[->,style=thick] (5,4) arc (270:325:1) ;
	\node at (6.2,4.1) {$\theta, \omega$};
	\draw[->, style=thick, color = blue]  (8,1) -- (7.7,0.8);  
	\node[color = blue] at (8.3,0.65){$\omega-\Delta \omega$};
	\draw[->, style=very thick, color = blue]  (2,1) -- (2.3,0.8);  
	\node[color = blue] at (1.7,0.65){$\omega+\Delta \omega$};

\end{tikzpicture}
	\end{adjustbox}  	
	\begin{adjustbox}{max width=0.4\textwidth}
		\begin{tikzpicture}
\SetGraphUnit{2} 
\tikzset{VertexStyle/.style = {minimum size=0.8cm,
                               font=\Large\bfseries},thick, color = white} 
\Vertex{1} \EA(1){2} 
\Loop[dist=2cm,dir=NOWE](1)  
\Loop[dist=2cm,dir=NOEA](2)  

\draw[style = thick, fill=white] (1.1,-0.2) ellipse (1.3cm and 1.0cm) node[color = black] {$\dot{\bx} = f(\bx)$} ;
\node[color = black,align = center,font = \footnotesize] at (-1.6,-0.25) {$(\theta=-\theta^*) \wedge (\omega<0)$};
\node[color = black,align = center,font = \footnotesize] at (-0.3,1.4) {$\bx :=(-\theta^*,\omega + \Delta\omega)$};
\node[color = black,align = center,font = \footnotesize] at (2.5,1.4) {$(\theta=\theta^*) \wedge (\omega>0)$};
\node[color = black,align = center,font = \footnotesize] at (3.7,-0.25) {$\bx :=(\theta^*,\omega - \Delta\omega)$};
\end{tikzpicture}

	\end{adjustbox}  
	\begin{adjustbox}{max width=0.2\textwidth}
		\begin{tikzpicture}[line/.style={thick} ]

\draw (0,0) arc (180:360:2 and 0.6);
\draw[dashed] (4,0) arc (0:180:2 and 0.6);

\draw (0,2) arc (180:360:2 and 0.6);
\draw[dashed] (4,2) arc (0:180:2 and 0.6);

\draw (4,4) arc (0:360:2 and 0.6);

\draw (0,0) -- (0,4) ;
\draw (4,0) -- (4,4) ;

\draw[style = densely dotted, style=thick] (2,-0.6) -- (2,3.4) ;

\draw[style = very thick] (2,3.6) -- (2,3.9) ;
\draw[->,style=thick] (2,3.75) arc (270:295:2 and 0.7);
	\node at (2.6,4.0) {$\theta$};
	
\draw[style = very thick] (-0.5,2) -- (-0.2,2) ;
	\draw[->,style=thick] (-0.35,2) -- (-0.35, 2.6);
	\node at (-0.6,2.4) {$\omega$};


\draw[style = very thick, style = densely dashed, color = gray] (3.5,1.6) -- (3.5,3.6) ;
\draw[style = very thick, style = densely dashed, color = gray] (0.5,-0.4) -- (0.5,1.6) ;
	\node[color = gray,align = left]  at (4.05,3) {$\theta = \theta^*$};
	\node[color = gray,align = left]  at (1.2,1) {$\theta = -\theta^*$};

\end{tikzpicture}
	\end{adjustbox}
	\caption{The  hybrid pendulum: (left) upon passing the angle $\pm \theta^*$ from below the pendulum experiences a change in angular velocity by $\Delta \omega$, (center) the corresponding hybrid automaton representation, (right) the state-space of the system. }  \label{fig:pendulum}
\end{figure*}

\subsection{The hybrid pendulum}
\label{sec:setup}

Consider a mathematical pendulum with length $l$ and mass $m$. In the absence of damping or external forcing, the equations of motion for this system are formed by defining $\boldsymbol{x} := (\theta, \omega) \in \mathbb{S}^{1} \times \mathbb{R} =: X$ and $\boldsymbol{f}: X \mapsto \mathbb{R}^2$ such that: 
\begin{displaymath}
\dot{\boldsymbol{x}} = \boldsymbol{f}( \boldsymbol{x}) = \begin{bmatrix} \omega \\ -(g/l) \sin \theta \end{bmatrix}.
\label{eq:motion}
\end{displaymath}
Now suppose that the pendulum experiences an instantaneous backwards ``kick'' when passing through the given angles $\pm\theta^*$.  This kick is modeled by an instantaneous change in angular velocity $\Delta \omega >0$. In the hybrid automaton notation, the kick is included in the model as a reset map $\mathcal{R}: X \mapsto X$ defined by,
\begin{displaymath}
\mathcal{R}(\theta, \omega) = \begin{cases} (-\theta^*, \omega+\Delta \omega) &\mbox{if } (\theta = -\theta^*) \wedge (\omega <0) \\ 
(\theta^*, \omega-\Delta \omega) &\mbox{if } (\theta = \theta^*) \wedge (\omega >0)  \end{cases}, 
\label{eq:reset}
\end{displaymath}    
where, for ease of notation, we have incorporated the guard conditions as well. Note that the reset only occurs upon passing through $\pm \theta^*$ from below, so that the kick is always directed towards the stable equilibrium point of the pendulum. We also remark that according to this formulation, no reset occurs when the pendulum only grazes the ``kicking surfaces'' at $\pm \theta^*$. The situation is illustrated in Fig.~\ref{fig:pendulum}, where we also show the corresponding hybrid automaton representation and the state-space.

\subsubsection*{Normalized equations}	
To simplify our analysis, it is convenient to normalize the state $\boldsymbol{x}$ by dividing the angular velocity $\omega$ by the kick strength $\Delta \omega$, i.e.
\begin{equation}
\begin{bmatrix}
\dot{\theta} \\ \dot{p} 
\end{bmatrix} = \begin{bmatrix} \mu_1 p  \\ -\left(\mu_2^2 / \mu_1 \right) \sin \theta \end{bmatrix}. 
\label{eq:pendnorm}
\end{equation}
and
\begin{equation}
\mathcal{R}(\theta, p) = \begin{cases} (-\theta^*, p + 1) &\mbox{if } (\theta = -\theta^*) \wedge (p <0) \\ 
(\theta^*, p -1) &\mbox{if } (\theta = \theta^*) \vee (p >0)  \end{cases},  
\label{eq:quardnorm}
\end{equation}  
where $p=\omega/\Delta\omega$ denotes the normalized momentum. The equations \eqref{eq:pendnorm},  \eqref{eq:quardnorm}, are parametrized in terms of $\mu_1 := \Delta \omega>0$  (the kick strength) and  $\mu_2 := \omega_n := \sqrt{g/l}>0$ (the natural frequency of the linearized pendulum). Note that the continuous part of the (undamped) hybrid pendulum is Hamiltonian. One can verify that the function:
\begin{equation}
\mathcal{H}(\theta,p) := \frac{1}{2}  \left(\frac{\mu_1}{ \mu_2} p\right)^2 + 1-\cos \theta \label{eq:hamdef}
\end{equation}
constitute an invariant for the flow of \eqref{eq:pendnorm}. This particularly implies that, in between state resets, the trajectories of the pendulum are confined to level sets of \eqref{eq:hamdef}. 
\subsubsection*{The damped case}	
We will also look into the effects of weak viscous damping on the system. 
The continuous part of the hybrid system in that case  is replaced by
\begin{equation}
\begin{bmatrix}
\dot{\theta} \\ \dot{p} 
\end{bmatrix} = \begin{bmatrix} \mu_1 p  \\ -\left(\mu_2^2 / \mu_1 \right) \sin \theta - k p \end{bmatrix} \label{eq:penddamped}
\end{equation}
where $k>0$ is the viscous damping coefficient.

\section{The undamped hybrid pendulum: classical geometric analysis}
\label{sec:mapping}

In this section, the state-space of the undamped hybrid pendulum is described from a geometric perspective.

\subsection{The Poincar\'{e} map}
The trajectories of the freely oscillating pendulum are confined to the level sets of \eqref{eq:hamdef}. Through this observation, it is clear that for initial conditions belonging to the set
\begin{equation}
\mathcal{A}_1 :=  \left\{(\theta, p)\in X:  \mathcal{H}(\theta,p) < \mathcal{H}(\theta^*,0)  \right\} \label{eq:simple}
\end{equation}
the behavior of the hybrid pendulum is exactly identical to that of the freely oscillating pendulum. 

The more distinctive behavior can be found only in the region: $\mathcal{H}(\theta,p) \geq \mathcal{H}(\theta^*,0)$. In this part of the state-space, the pendulum gets ``kicked'' at least once in its orbit for almost any initial condition. The only initial conditions that never get kicked here are those that lie exactly on the homoclinic orbit of the unstable fixed point, and additionally satisfy $\theta>\theta^*$, $p>0$ or $\theta<-\theta^*$, $p<0$.  

The dynamics of the kicked region can be fully understood from a discrete map defined on the kicking surfaces. Given that the orbits between two consecutive impacts are uniquely determined by the momentum at $\pm \theta^*$, a map can be defined to describe the (normalized) momentum $p\big|_{\theta=\pm \theta^*}$ of the pendulum right \emph{before} the next impact. To do this thoroughly, we first describe those points on the kicking surfaces that directly get mapped into the homoclinic orbit. Let:
\begin{equation}
p_{cr}:= \frac{\mu_2}{\mu_1} \sqrt{ 2 + 2 \cos\theta^*}
\end{equation}
denote the critical momentum required to be on the homoclinic orbit and assign the variable $\gamma$ to be the state of the pendulum at the unstable fixed point. The Poincar\'{e} map $T: \lbrace{\mathbb{R}\cup \gamma}\rbrace \mapsto \lbrace{\mathbb{R}\cup \gamma}\rbrace$ is defined by
\begin{equation}
T(p) = \begin{cases} p+1 & p< -(p_{cr}+1) \\ 
|p + 1| & -(p_{cr}+1) < p \leq 0 \\
0 & p=0 \\
-|p - 1| & 0 <p < p_{cr}+1 \\
p - 1& p_{cr}+1 < p \\
\gamma & p=\gamma \mbox{ or }p=\pm (p_{cr}+1) 
\end{cases}
\label{eq:redmap}
\end{equation}

\subsection{Asymptotic dynamics of the hybrid pendulum} \label{sec:asymptotics}
To fully describe the asymptotic dynamics of the pendulum in the region: $\mathcal{H}(\theta,p) \geq \mathcal{H}(\theta^*,0)$, we first state the following result about the map \eqref{eq:redmap}.
\begin{lemma}
	\label{thm:periodicorbit}
	The map $T: \lbrace{\mathbb{R}\cup \gamma}\rbrace  \mapsto \lbrace{\mathbb{R}\cup \gamma}\rbrace$ defined  by (\ref{eq:redmap}) has the following asymptotic properties:
	\begin{enumerate}[(i)]
		\item If $p \in D:= \left\{ p\in \mathbb{R}:  p = \gamma \wedge p =  \pm(p_{cr}+k), k\in\mathbb{N}  \right\}$, then there exists $M>0$, such that:
		\begin{displaymath}
		T^n(p) = \gamma, \quad \forall n > M
		\end{displaymath}
		\item If $p\notin D$, then there exists a $M>0$, such that:
		\begin{displaymath}
		T^n(p) \in \left[-1, 1\right], \quad \forall n > M
		\end{displaymath}
	\end{enumerate}
\end{lemma}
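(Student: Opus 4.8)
The plan is to exhibit a nested trapping structure for $T$ and to reduce the dynamics inside the ``central band'' to a one-dimensional tent-type recursion. First I would record the odd symmetry $T(-p)=-T(p)$, which lets me phrase everything in terms of the magnitude $u:=|p|$, together with the two fixed points $T(0)=0$ and $T(\gamma)=\gamma$. The whole argument then rests on three structural claims: (a) the interval $[-1,1]$ is forward invariant under $T$; (b) the open band $J:=(-(p_{cr}+1),\,p_{cr}+1)$ is forward invariant; and (c) on $J\setminus\{0\}$ one has $|T(p)|=\bigl||p|-1\bigr|$.

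Claim (a) I would verify branch-by-branch: for $p\in(0,1)$, $T(p)=-(1-p)=p-1\in(-1,0)$; for $p\in(-1,0)$, $T(p)=p+1\in(0,1)$; and $\pm1,0$ all map into $\{0\}$. Claims (b)--(c) follow from the same branch formulas after substituting $u=|p|$: on $(0,p_{cr}+1)$ one gets $|T(p)|=|p-1|$, and on $(-(p_{cr}+1),0)$ one gets $|T(p)|=|p+1|=\bigl||p|-1\bigr|$; since $\max(1,p_{cr})<p_{cr}+1$ the image stays strictly inside $J$, which is exactly forward invariance. Next I would analyze the scalar recursion $u\mapsto|u-1|$ on $u\ge0$: the interval $[0,1]$ is invariant because $u\in[0,1]$ forces $1-u\in[0,1]$, while for $u>1$ the map acts as $u\mapsto u-1$, so the orbit enters $[0,1]$ after at most $\lceil u\rceil$ steps. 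Transported back through (c), this shows every $p\in J$ reaches $[-1,1]$ in finitely many iterations and, by (a), remains there.

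To finish part (ii) it then remains to drive an arbitrary $p\notin D$ into the band $J$. If $|p|>p_{cr}+1$ then $T$ acts as $|p|\mapsto|p|-1$, and because $p\notin D$ the orbit never equals $\pm(p_{cr}+1)$; hence it descends into $J$ strictly (its magnitude landing in $(p_{cr},p_{cr}+1)$) after finitely many steps, at which point the previous paragraph applies. For part (i) I would observe that $D$ is precisely the backward orbit of $\gamma$: $\gamma$ is fixed, $\pm(p_{cr}+1)\mapsto\gamma$, and for $k\ge2$ one has $\pm(p_{cr}+k)\mapsto\pm(p_{cr}+(k-1))$, so $T^{k}\bigl(\pm(p_{cr}+k)\bigr)=\gamma$ for every $k\ge1$; thus $T^n\equiv\gamma$ for all $n\ge k$, giving the claim with $M=k-1$.

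I expect the main obstacle to be careful bookkeeping of the boundary and exceptional points rather than any deep idea. Specifically, one must confirm that orbits starting outside $D$ always land strictly inside $J$ and never hit the exceptional set $\pm(p_{cr}+1)$ that is swept into $\gamma$ — this is exactly the role of excluding $D$ — and one must handle the overriding special cases $T(0)=0$ and the break in the branch formula at $p=0$, so that claim (c) and the tent-map reduction remain valid there.
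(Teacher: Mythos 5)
Your proof is correct and takes essentially the same route as the paper's: identify $D$ as the backward orbit of $\gamma$ (the paper phrases this as computing the pre-images $T^{-k}(\{\gamma\})$), establish that $[-1,1]$ is positively invariant, and use the fact that $|T(p)|=|p|-1$ for $|p|>1$ off the exceptional set to force entry into $[-1,1]$ in finitely many steps. The intermediate trapping band $J$ and the explicit tent-map reduction are just more careful bookkeeping of the same argument.
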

\begin{proof}
	(i) can be established by computing the pre-images $T^{-{k}}(\lbrace \gamma \rbrace)$ for $k\in\mathbb{N}$.
	To show that (ii) is true, observe at first that $T(\left[-1, 1\right]) = ( -1, 1)$,  which shows that $\left[-1, 1\right]$ is a positively invariant set. Now consider any $p\notin D$ with $|p|>1$, then $|T(p)| = |p| - 1$. Using induction, we may show that: 
	$$
	| T^n(p) | = |p| - n, \quad |p| > n \label{eq:proofrel1}
	$$
	from where it follows that $T^n(p)$ enters the interval $\left[-1, 1\right]$ in a finite number of iterations.
\end{proof}

Lemma~\ref{thm:periodicorbit} states that the interval $\left[-1, 1\right]$ is an attracting set for (\ref{eq:redmap}). Furthermore, it states that the trajectories enter the interval $\left[-1, 1\right]$ in a finite number of iterations. The interior of the attracting set is composed of:
\begin{enumerate}[(i)]
	\item a fixed point at $p=0$.
	\item an uncountable family of period-2 cycles of the form:
	\begin{equation}
	\{p_1,p_1-1\},  \quad p_1 \in (0,1)  \label{eq:period2orbits}
	\end{equation}
\end{enumerate}

These results on the map \eqref{eq:redmap} are related to the actual hybrid system in the following way. The interval $[-1,1]$ from  Lemma~\ref{thm:periodicorbit} corresponds to the set
\begin{equation}
\label{eq:attractingset}
\mathcal{A}_2 :=  \left\{(\theta, p)\in X: H_-  \leq \mathcal{H}(\theta,p) \leq H_+, |\theta| \leq \theta^* \right\} 
\end{equation}
where:
\begin{equation}
H_- := \mathcal{H}\left(\theta^*,0\right), \quad H_+ := \mathcal{H}\left(\theta^*,1\right). \label{eq:defHplusmin}
\end{equation}
This set is foliated by an uncountable family of limit cycles. In correspondence with the period-2 cycles of the map (\ref{eq:redmap}), the limit cycles can as well be parametrized by  $\{p_1,p_1-1\}$, $p_1 \in (0,1)$. We have the following relationship between the original coordinates and the limit cycle in which the system is on:
\begin{equation}
p_1(\theta,p) = \begin{cases} \frac{\mu_2}{\mu_1 } \sqrt{2(\mathcal{H}(\theta,p) - 1 + \cos \theta^*)} & p\geq 0 \\
1- \frac{\mu_2}{\mu_1 } \sqrt{2(\mathcal{H}(\theta,p) - 1 + \cos \theta^*)}& p<0 \end{cases} \label{eq:action}
\end{equation}
where $(\theta,p)\in \mathcal{A}_2$. In summary, we have the following result.
\begin{theorem} \label{thm:main}
	The trajectories of the hybrid pendulum, starting from almost everywhere in the region': $\mathcal{H}(\theta,p) \geq \mathcal{H}(\theta^*,0)$, enter into a discontinuous periodic orbit in finite time. 
\end{theorem}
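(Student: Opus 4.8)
The plan is to lift the asymptotic analysis of the scalar map $T$ in \eqref{eq:redmap}, already settled by Lemma~\ref{thm:periodicorbit}, to the continuous-time hybrid flow, and then to dispose of the exceptional initial conditions by a measure argument. First I would argue that for almost every $(\theta,p)$ in the region $\mathcal{H}(\theta,p)\geq\mathcal{H}(\theta^*,0)$ the forward trajectory meets one of the kicking surfaces $\theta=\pm\theta^*$ in finite time and therefore admits a well-defined sequence of pre-impact momenta $p_1,p_2,\dots$ governed by $p_{n+1}=T(p_n)$. The only initial conditions for which no impact ever occurs are those on the homoclinic orbit of the unstable fixed point with the sign restrictions noted after \eqref{eq:simple}; this is a single level set of $\mathcal{H}$, hence a measure-zero curve, which I exclude at the outset.

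Next I would invoke Lemma~\ref{thm:periodicorbit}(ii): whenever the pre-impact momentum avoids the discrete set $D$, there is a finite $M$ with $T^n(p)\in[-1,1]$ for all $n>M$. Since $T([-1,1])=(-1,1)$ and every point of $(-1,1)$ lies on exactly one of the period-2 cycles $\{p_1,p_1-1\}$, $p_1\in(0,1)$, of \eqref{eq:period2orbits} (or equals the fixed point $0$), after finitely many iterations the pre-impact momentum sits \emph{exactly} on a period-2 cycle rather than merely approaching one. Through the correspondence \eqref{eq:action} this cycle is the trace on the kicking surfaces of one of the limit cycles foliating the set $\mathcal{A}_2$ of \eqref{eq:attractingset}, so the continuous-time trajectory has entered a genuine, discontinuous periodic orbit. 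Finiteness of the entry \emph{time} follows because each iteration of $T$ corresponds to a single inter-impact arc of the Hamiltonian flow, whose duration is finite on the bounded energy band $H_-\le\mathcal{H}\le H_+$; a finite number of iterations therefore consumes only finite time.

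It then remains to show that the discarded initial conditions form a null set. The trajectories that fail to reach $\mathcal{A}_2$ are exactly those whose pre-impact momentum sequence eventually lands in $D$, whereupon Lemma~\ref{thm:periodicorbit}(i) drives them to the unstable fixed point $\gamma$. I would show this set is null as follows: for each fixed $d\in D$, the points whose first impact occurs with momentum $d$ lie on the single energy level $\{\mathcal{H}=\mathcal{H}(\theta^*,d)\}$, a one-dimensional curve of planar measure zero; since conservation of $\mathcal{H}$ between kicks makes the set of initial conditions ever attaining a pre-impact momentum in $D$ a countable union of such level sets, it is measure zero. Removing this null set together with the homoclinic curve leaves a full-measure set on which the previous paragraph applies, which is precisely the assertion of the theorem.

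The genuinely delicate step is not the scalar dynamics — which Lemma~\ref{thm:periodicorbit} already delivers — but the faithful translation between the one-dimensional impact map and the two-dimensional hybrid flow. One must verify both that a period-2 orbit of $T$ closes up into a single continuous-plus-reset loop in state space (so that \eqref{eq:action} really parametrizes honest periodic orbits, which I expect to rely on the $\theta\mapsto-\theta$, $p\mapsto-p$ symmetry of the pendulum), and that the measure-zero structure of $D$ on the section pulls back to a measure-zero structure on $X$ under the impact map. Establishing these two correspondences carefully is where the bulk of the rigor lies; the remainder is bookkeeping with \eqref{eq:redmap}.
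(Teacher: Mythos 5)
Your proposal follows essentially the same route as the paper's own (three-sentence) proof: invoke Lemma~\ref{thm:periodicorbit} to get finite-time entry of the pre-impact momentum into $[-1,1]$, observe that every point there lies exactly on a period-2 cycle (or the fixed point) so the trajectory is genuinely periodic rather than merely attracted, and discard the homoclinic/pre-image-of-$\gamma$ set as measure zero. The paper leaves the lifting from the section map to the flow and the null-set bookkeeping implicit, whereas you spell them out; there is no substantive difference in approach.
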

\begin{proof} Trajectories in the interior of the set (15) are already in a discontinuous periodic orbit. From lemma 1 it follows that almost all other trajectories eventually enter one of these periodic orbits. A measure zero set of trajectories get kicked into the homoclinic orbit, hence the statement almost everywhere.
\end{proof}

\subsection{Basin of attraction}

The basin of a specific limit cycle $p_1$ can be found by repeatedly computing the pre-images of the map (\ref{eq:redmap}). From this construction, we observe that the basin of every limit cycle is a measure zero set. 
Fig.~\ref{fig:basin} shows the basin of the limit cycle at $p_1 = 0.7$ for three different values of $p_{cr}$. 

\begin{figure*}[t!]
	\centering
	\subfigure[$p_{cr} =  1.65$]{ \includegraphics[width=.32\textwidth]{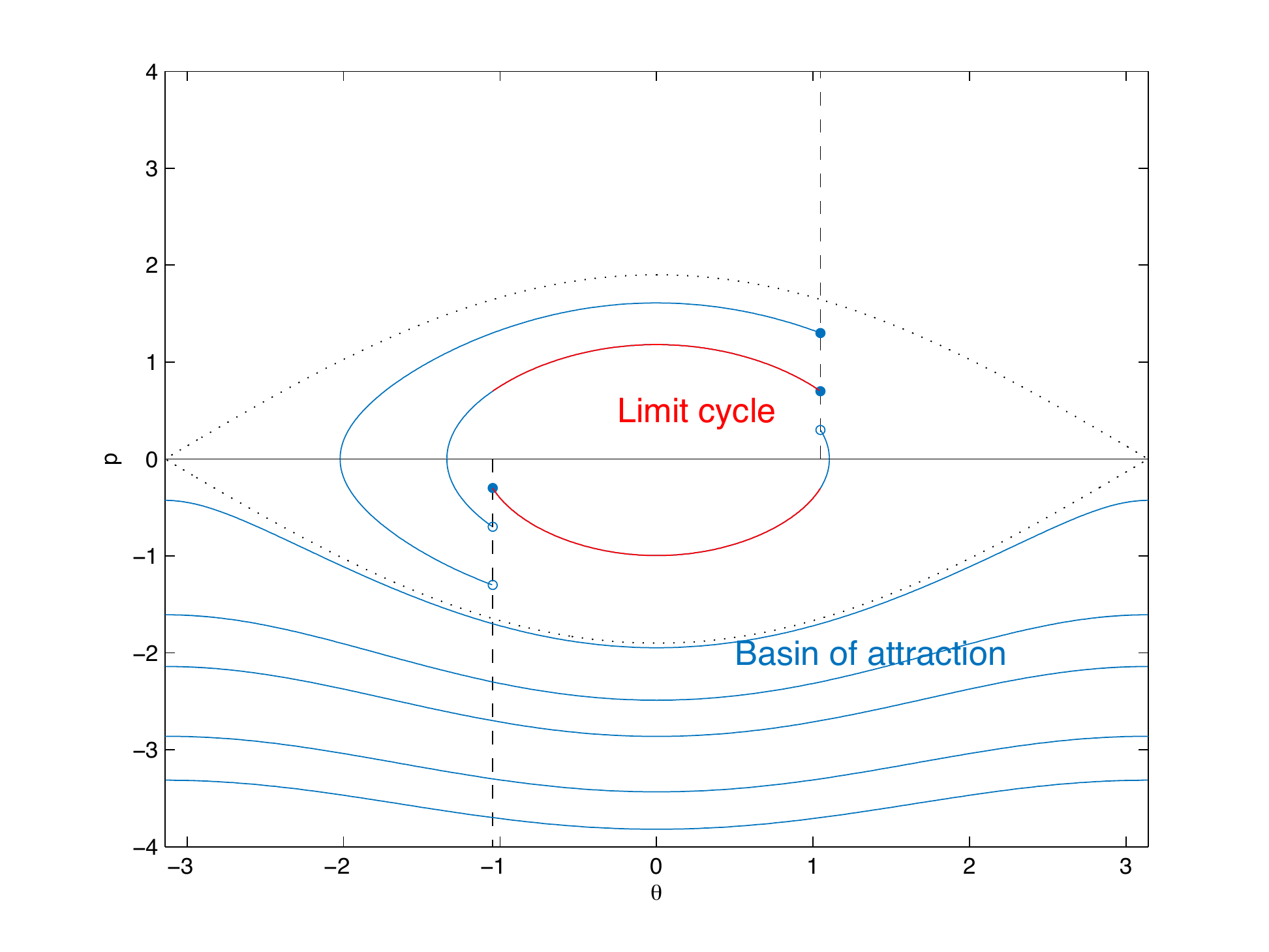} }
	\subfigure[$p_{cr} =  1.7$]{\includegraphics[width=.32\textwidth]{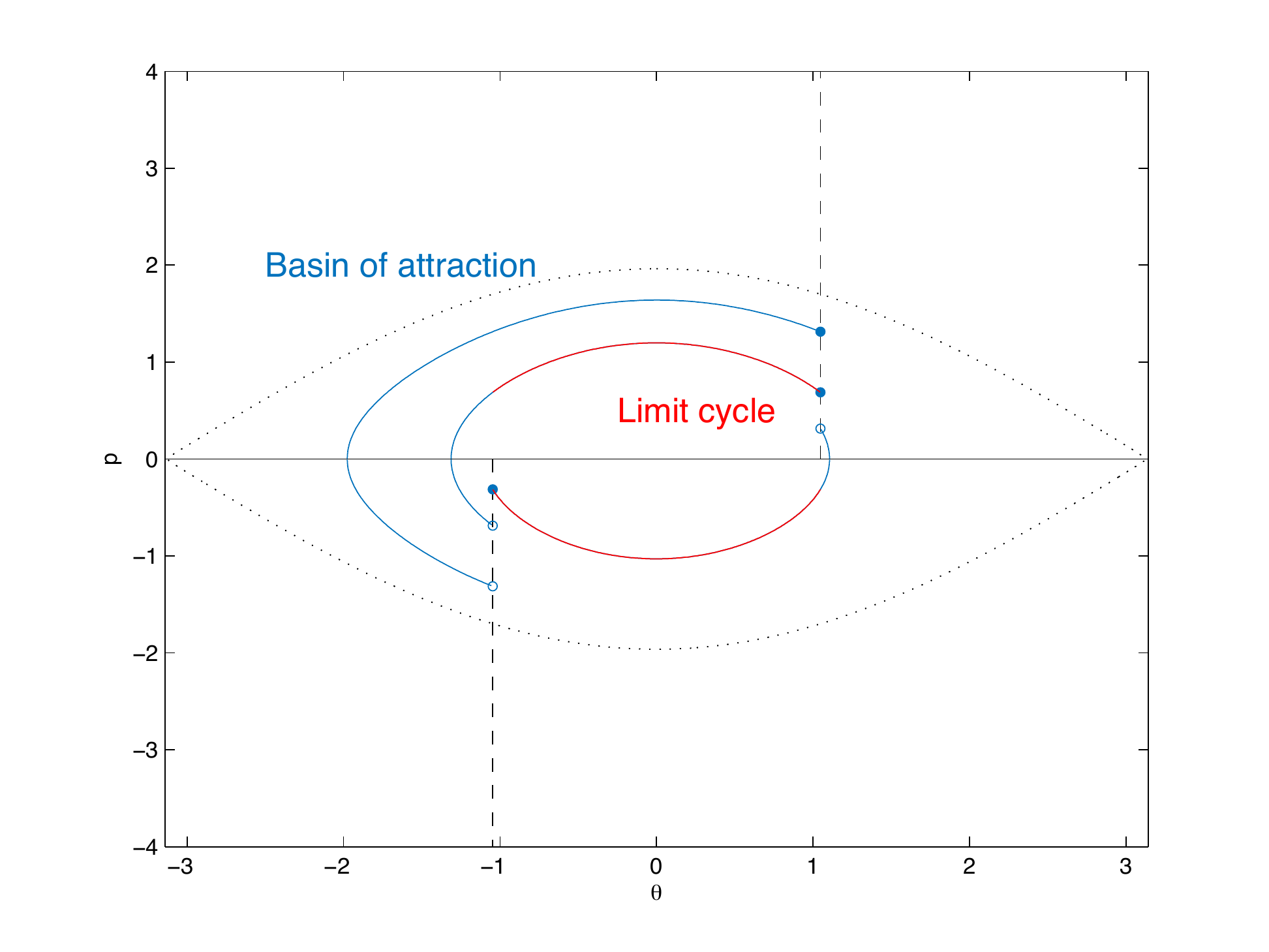} 	}
	\subfigure[$p_{cr} =  1.73$]{\includegraphics[width=.32\textwidth]{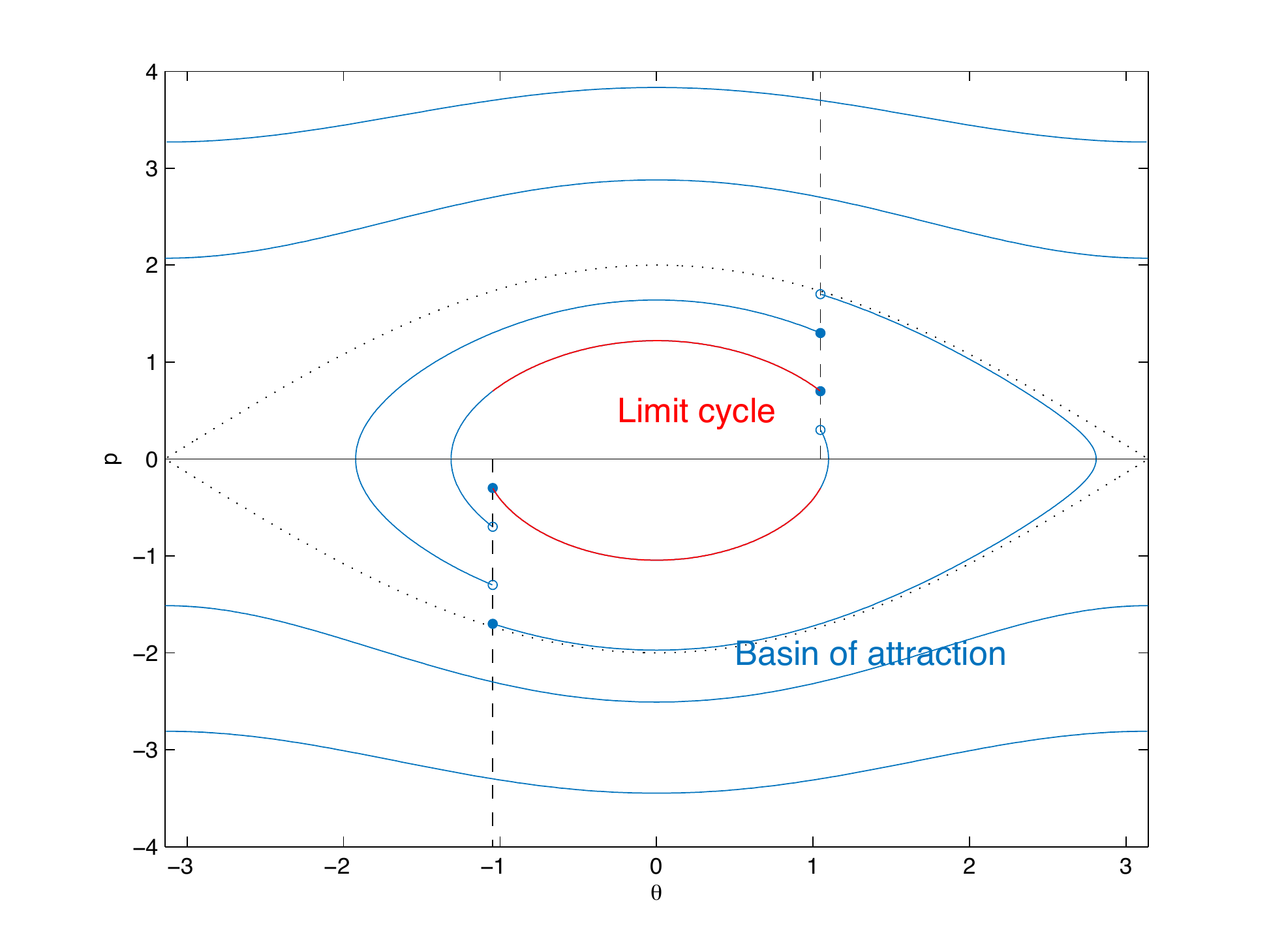}  }
	\caption{Basin of attraction for the limit cycle with $p_1 = 0.7$.} \label{fig:basin}
\end{figure*}

\subsection{Action-angle coordinates}

If the pendulum is released at $\theta=-\theta^*$ with an initial momentum $p_1\in (0,1)$, then the time required to reach a certain  $\theta \in [-\theta^*, \theta^*]$ is determined by the elliptic integral:
$$ \Gamma[\theta,p_1] = \int_{-\theta^*}^{\theta}\frac{1}{\mu_2}\left[ 2 \left(\frac{1}{2}\left( \frac{\mu_1}{\mu_2} p_1 \right)^2  - \cos \theta^* + \cos \xi \right) \right]^{-\frac{1}{2}}\mathrm{d}\xi$$
The function $\Gamma[\theta,p_1]$ permits us to define  action-angle coordinates for the set \eqref{eq:attractingset}. 

The period of a specific limit cycle $\{p_1, p_1 -1 \}$ is given by the formula:
\begin{equation}
P[p_1] = \Gamma[\theta^*,p_1] + \Gamma[\theta^*,1-p_1]
\end{equation}
On every limit cycle $\{p_1, p_1 -1 \}$, we can assign a phase coordinate $\psi\in[0,2\pi)$ such that: $\psi = 0$ at $(\theta, p) = (-\theta^*,p_1)$. This is done as follows: let $o(p_1)$ denote the orbit of a specific limit cycle, i.e.
$$ o(p_1) := \left\{  (\theta, p) \in X: S^t(-\theta^*,p_1) = (\theta, p) \mbox{ for some }t\geq0    \right\}  $$
Then, the phase on $o(p_1)$ can be defined as:
\begin{equation}
\psi=  \frac{1}{P[p_1]}\begin{cases}  \Gamma[\theta,p_1] & p > 0   \\ \Gamma[\theta^*,p_1] + \Gamma[\theta^*-\theta,1-p_1] & p<0  \end{cases} \label{eq:normphase}
\end{equation}
where $(\theta,p) \in o(p_1)$. 

The formulas \eqref{eq:normphase} together with \eqref{eq:action} define action-angle coordinates for the interior of the set \eqref{eq:attractingset}. That is, under the coordinate transformation $(I,\psi)= \boldsymbol{h}(\theta,p)$, where:
\begin{subequations}
	\begin{equation}
	h_1(\theta,p) = p_1(\theta,p)  \label{eq:conj1}
	\end{equation}
	\begin{multline}
	h_2(\theta,p)  = \\ \frac{1}{P[p_1(\theta,p) ]}\begin{cases}  \Gamma[\theta,p_1(\theta,p)] & p > 0   \\ \Gamma[\theta^*,p_1(\theta,p) ] + \Gamma[\theta^*-\theta,1-p_1(\theta,p) ] & p<0  \end{cases}
	\end{multline} \label{eq:bijection}
\end{subequations}
the set $\mathcal{A}_2$ is mapped onto the set 
\begin{equation}
Y:=(0,1)\times \mathbb{S}^1 \label{def:domainY}
\end{equation}
under which the flow is simply 
\begin{equation} \boldsymbol{R}^t(I, \psi) = \left(I, \left( \Omega[I] t + \psi\right) \mod 2\pi \right)  \label{eq:flowactionangle}\end{equation}
where $\Omega[I] := {2\pi }/{P[I]}$.

The kicked pendulum has an invariant measure whose support is restricted to the set $\mathcal{A}_2$. In fact, under the bijection \eqref{eq:bijection} the dynamics on the set $\mathcal{A}_2$ is conjugate to the Lebesgue measure-preserving system in \eqref{eq:flowactionangle}. Hence, if $\mu_{Y}$ denotes the Lebesgue measure for the domain \eqref{def:domainY}, then 
\begin{equation}
\mu_{\mathcal{A}_2} = \mu_{Y} \circ \boldsymbol{h} \label{eq:invmeas}
\end{equation}    
is an invariant measure for the hybrid system on \eqref{eq:attractingset}.

\section{The undamped hybrid pendulum: Koopman analysis}
\label{sec:numerical}

In this section, the undamped hybrid pendulum is analyzed from the Koopman operator theory perspective.

\subsection{Eigenspace of Koopman at $\lambda=0$} \label{sec:eigspacezero}

\begin{figure*}[t!]
	\centering
	\includegraphics[width=.46 \textwidth]{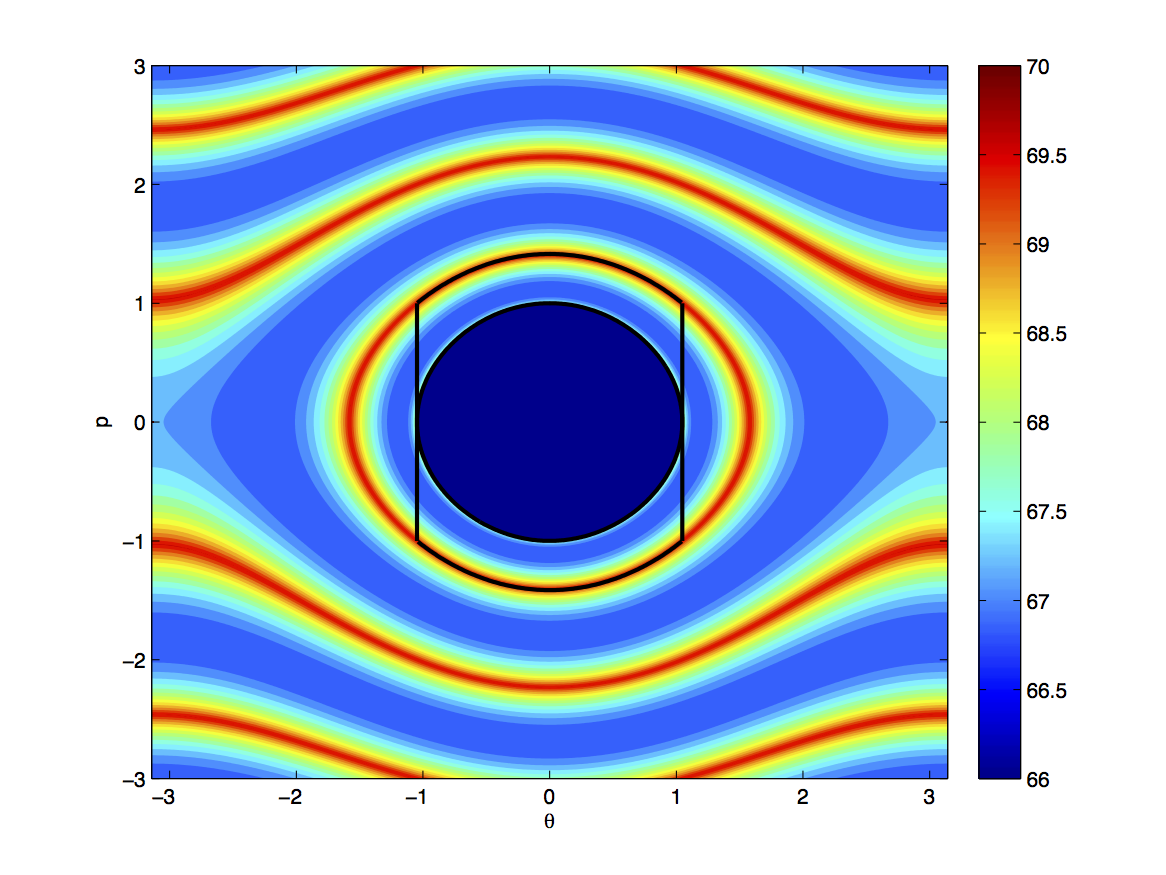} 
	\includegraphics[width=.46 \textwidth]{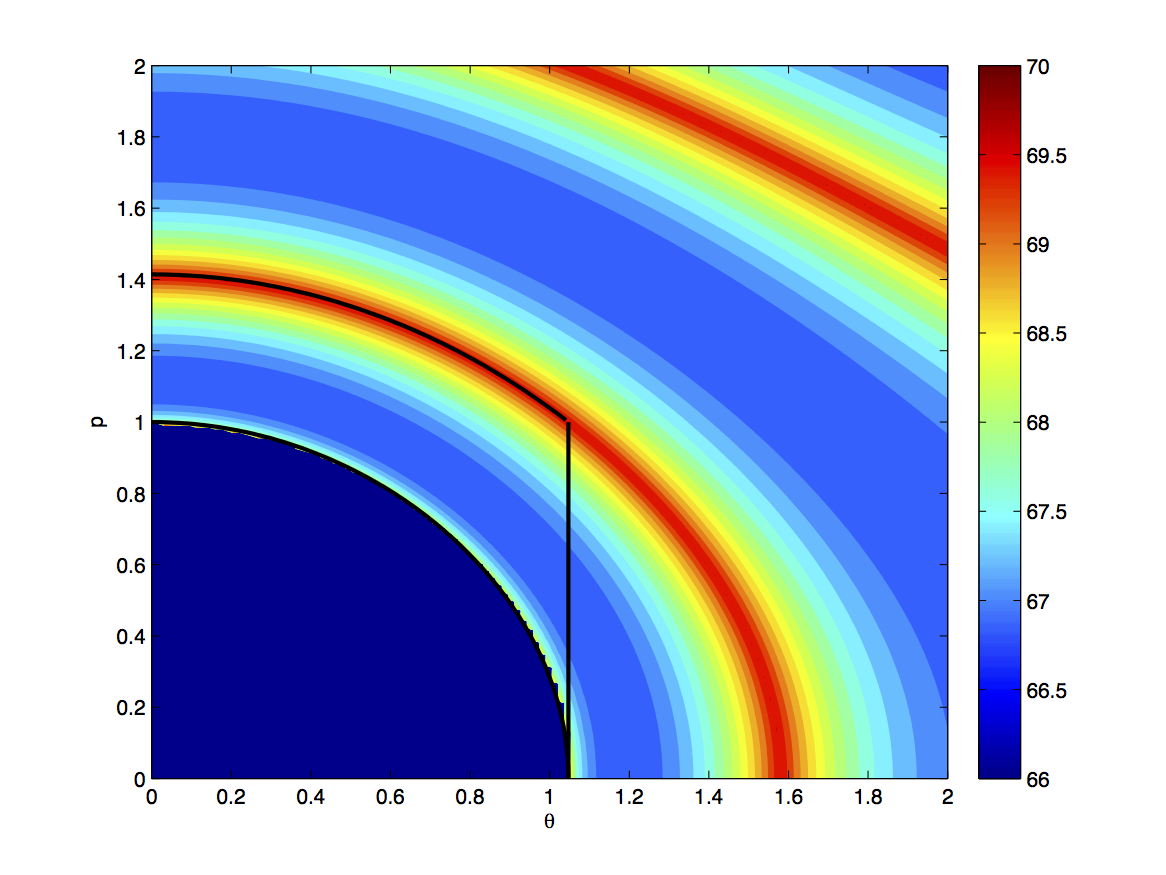}
	\includegraphics[width=.46 \textwidth]{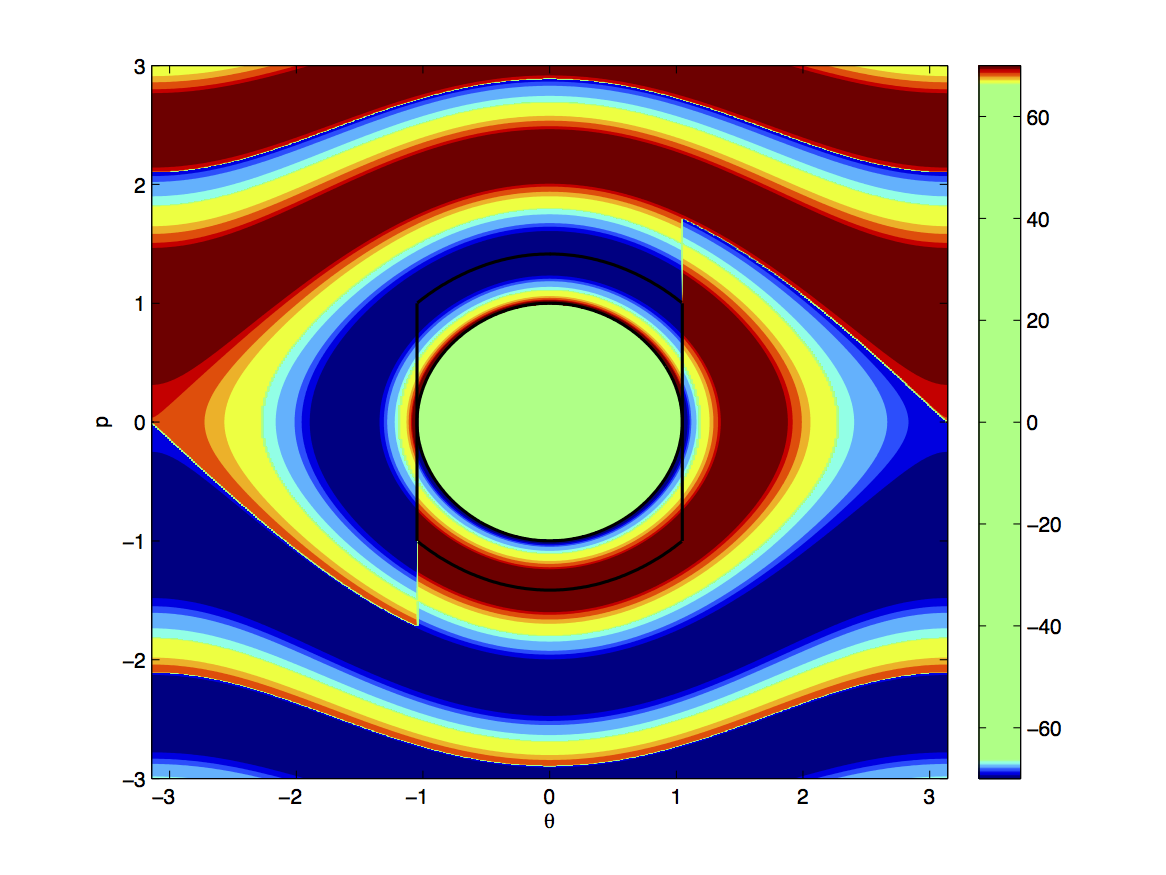}
	\includegraphics[width=.46 \textwidth]{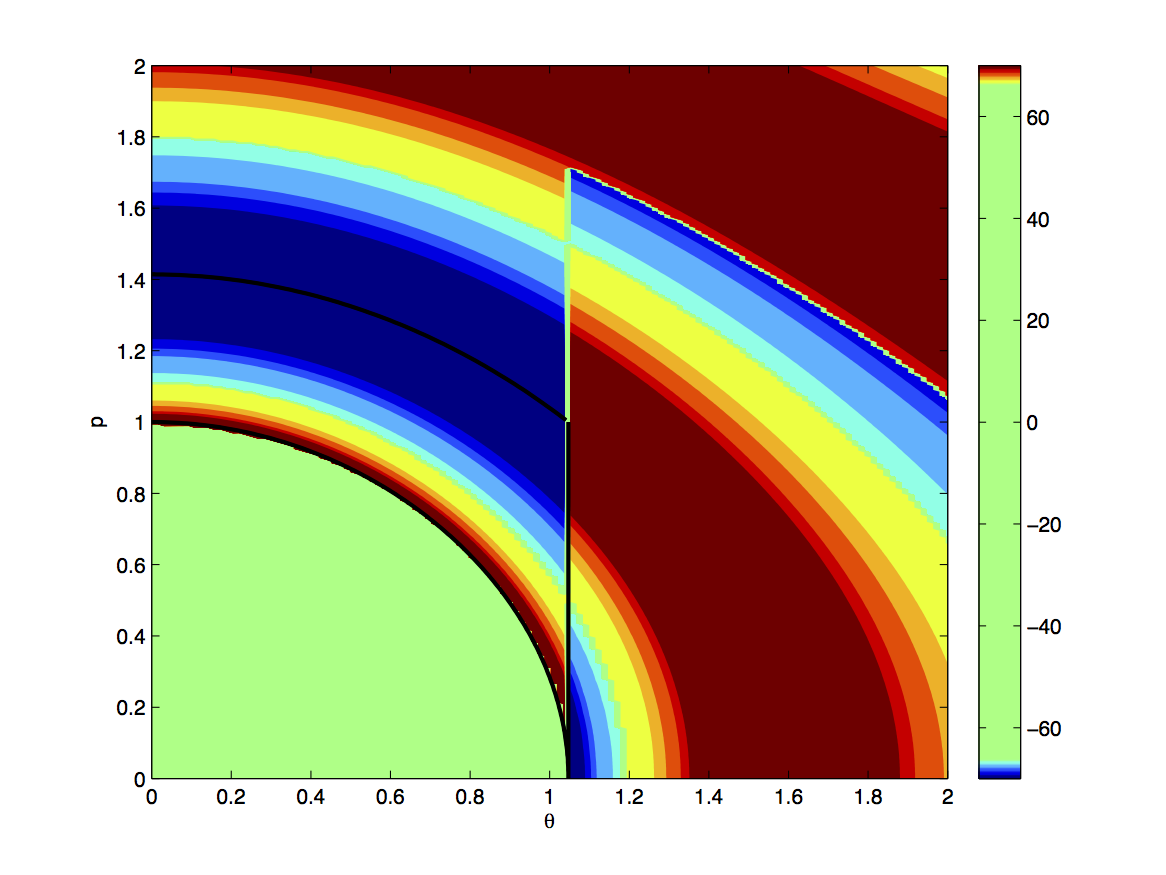}
	
	\caption{Projections of observables onto the eigenspace of Koopman at $\lambda = 0$. The boundaries of the set of limit cycles \eqref{eq:attractingset} is demarcated in black, $\mu_1 = 1 $ rad/s, $\mu_2 = 1$ rad/s, and $\theta^*=\frac{\pi}{3}$ rad. The top two figures are the results obtained with the Hamiltonian function \eqref{eq:hamdef}, whereas the bottom two are those obtained with the signed Hamiltonian function \eqref{eq:signedHam}. The figures on the right show a close up of the results into the region {[0,2] x [0,2]}. }.  \label{mesocronic}
\end{figure*}

As it was observed from \eqref{eq:levelsetprop}, the level sets of Koopman eigenfunctions at $\lambda=0$ partition the state-space into invariant sets. In fact, the characterization of the eigenspace at $\lambda=0$ is directly related to the ergodic partition \cite{Mezic1999, Mezic2004}. 

To describe this partition for the hybrid pendulum, consider at first the dynamics on the invariant set \eqref{eq:attractingset}. From section~\ref{sec:mapping}, we know that this set is foliated by an uncountable family of limit cycles. Given this property, one can verify that
\begin{equation} \phi_{0}(\theta, p) = \delta(h_1(\theta,p) - I_0),\quad I_0\in(0,1) \label{blabla1}
\end{equation}
with $\delta$ denoting the Dirac delta function, form a collection of eigendistributions at $\lambda = 0$, where $h_1$ specifically refers to \eqref{eq:conj1}. In a certain sense, these distributions are the building blocks of all eigenfunctions at $\lambda = 0$. That is, if $c: (0,1) \mapsto \mathbb{C}$ denotes any (Riemann) integrable function, then the convolution:
$$ \phi_{0}(\theta, p) = \int^1_0  c(I) \delta(h_1(\theta,p) - I) dI \quad \equiv \quad c(h_1(\theta,p))$$
is an eigenfunction at $\lambda=0$. 

The definition of $\phi_{0}$ can be extended to outside of $\mathcal{A}_2$ if all initial conditions belonging to the basin of a particular limit cycle $\{p_1,p_1 -1\}$ are assigned the value $c(p_1)$. Clearly, if $c$ is a bijection, the level sets of $\phi_{0}$ separates the basins of every limit cycle.

A method to find eigenfunctions directly from the time-histories of observables involves computing their infinite-time averages \eqref{eq:avg}. These averages are projections of observables \eqref{eq:laplace} onto the eigenspace at $\lambda = 0$, and for observables that are integrable on the set $\mathcal{A}_2$, these averages are well-defined almost everywhere in the kicked region (follows directly from theorem~\ref{thm:main}). 

The top two figures in Fig. \ref{mesocronic} depict a high-resolution contour plot of a projection $\mathcal{P}^0 g$, when $g$ is set equal to the Hamiltonian function (\ref{eq:hamdef}). In the figures, states that have the same color belong to the same level set, and hence, fall under the same equivalence class of long-term dynamical behavior. Note that for this particular eigenfunction, these equivalence classes are not the actual limit cycles themselves, since trajectories that end up in the limit cycles:
$$\{p_1,p_1 -1\} \mbox{ and } \{1-p_1, - p_1\},  \quad p_1\in (0, 1/2)$$ 
have exactly the same time-average.

To obtain more refined partition, one generally needs to consider the product partition of multiple projections conjointly \cite{Mezic}. For the pendulum however, we may also determine the time-average of the observable:
\begin{equation}
g(\theta,p) =  \mbox{sign}(p) \mathcal{H}(\theta,p) \label{eq:signedHam}
\end{equation}
which gives the Hamiltonian a sign, depending on the direction in which the pendulum is moving. This observable is capable of separating the limit cycles $\{p_1,p_1 -1\}$ and $\{1-p_1, - p_1\}$ from each other. The bottom two figures of  Fig~\ref{mesocronic} show contour plots of the projections obtained with this observable.

\subsection{Spectral decomposition on the set of limit cycles} \label{sec:imaginaryspectrum}

We have pointed out that the hybrid pendulum is measure-preserving on the  set \eqref{eq:attractingset}, and an invariant measure is given by \eqref{eq:invmeas}. A consequence of this property is that the Koopman operator is unitary in $L^2(\mathcal{A}_2, \mu_{\mathcal{A}_2})$ and observables belonging to this space of functions admit a spectral decomposition with purely imaginary spectrum \cite{koopman1931hamiltonian,petersen1989ergodic}.

The derivation of the spectral decomposition is most conveniently obtained by first deriving the decomposition for the conjugate system \eqref{eq:flowactionangle} and then applying the conjugacy property of section~\ref{sec:Koopman_intro}. The evolution of a square-integrable function $g(I,\psi)\in L^2(Y, \mu_{Y})$ under the action of the Koopman operator is given by,
$$
\left[ \Koop^t g \right]   (I,\psi) = g(I,\psi+\Omega[I]t \mod 2\pi).
$$
By expanding the observables in a Fourier series  we obtain:
\begin{eqnarray*}
	\left[ \Koop^t g \right](I,\psi) & = & \Koop^t \left( \sum_{j\in\mathbb{Z}}g_j(I) e^{ij\psi} \right) \\
	& = & 
	\sum_{j\in\mathbb{Z}}g_j(I)e^{ij\Omega(I)t} e^{ij\psi} \\
	& = & \int^1_0  g_0(I) \delta(s-I) ds + \\
	& &\sum_{j\in \mathbb{Z},j\neq 0} \int_{\mathbb{R}} e^{i j \rho t} g_j(I) e^{ij\psi}\delta(j\Omega(I)-\rho)d\rho
\end{eqnarray*}
The spectral expansion can be written in the form:
\begin{equation}
\left[ \Koop^t g\right](I,\psi) = g^*(I)+\int_\mathbb{R}e^{i\rho t}dE(\rho) g(I,\psi) \label{eq:expaction}
\end{equation}
where the time average $g^*$ only has dependence on $I$ and where the projection-valued measure $dE(\rho)$ has the explicit expression:
$$
dE(\rho)g(I,\psi)=\sum_{j\in \mathbb{Z},j\neq 0}g_j(I)e^{ij\psi}\delta(j\Omega(I)-\rho)d\rho
$$
Substitution of \eqref{eq:bijection} into \eqref{eq:expaction} will yield the spectral expansion in the original coordinates. Note that the spectrum of the operator turns out to be continuous.

\section{The damped hybrid pendulum: classical geometric analysis}
\label{sec:damping}

In this section, the damped hybrid pendulum is studied from the geometric perspective. 

\subsection{Poincar\'{e} map for the damped system} \label{sec:geomdamp}

The asymptotic properties of the hybrid pendulum under viscous damping can again be analyzed through the study of some discrete map. Because of dissipation,  all trajectories that start at $\mathcal{H}(\theta,p) \geq \mathcal{H}(\theta^*,0)$ must eventually enter the set \eqref{eq:attractingset}. Given that all trajectories in $\mathcal{A}_1$ spiral into the stable fixed point of the pendulum, the analysis of what happens in $\mathcal{A}_2$ is critical to determining the global properties of the system overall. 

The analysis is pursued by viewing the system in terms of its energy state $H$, which has a one-to-one correspondence with the normalized momentum $p$ if we exploit the inherent symmetry in the system. The discrete map that characterizes the dynamics inside  $\mathcal{A}_2$ is defined in reference to the value of the energy at the condition $\left(\pm\theta^*,p\right)$, where $0 \le p \le 1$. In the most general sense, the map takes the form: 
\begin{equation} 
H'  = u\left(H\right) := f \circ d(H) \label{eq:dampmap}
\end{equation} 
Here, $u$ is a composition of two separate functions: $d$, which represents the energy dissipated  due  to damping as the pendulum traverses from $\pm\theta^*$ to $\mp\theta^*$, and $f$, which represents the energy change related to the kicking of the pendulum at $\pm\theta^*$. By knowing the change in momentum that occurs after a kick, and given \eqref{eq:hamdef},  one can derive that:
\begin{equation}
f(H) = H - \left( \frac{\mu_1}{\mu_2}\right)^2 p(H) + \frac{1}{2}\left(\frac{\mu_1}{\mu_2}\right)^2 \label{eq:fdef}
\end{equation}
where:
$$
p(H) = \frac{\mu_2}{\mu_1}\sqrt{2(H + \cos \theta^* -1)}
$$
The dissipation function $d$, on the other hand, must be a monotonically increasing function and takes the form $d\left(H\right) = r\left(H\right)H$ with $0<r\left(H\right)<1$ for all $H>0$.  Based on some numerical simulations (see figure~\ref{fig:dampingfigure}), we make a specific \emph{assumption} that the dissipation function is linear: 
$$d(H) = r H, \quad 0<r<1 $$
\begin{figure}[h!]
	\centering
	\includegraphics[width=0.7\textwidth]{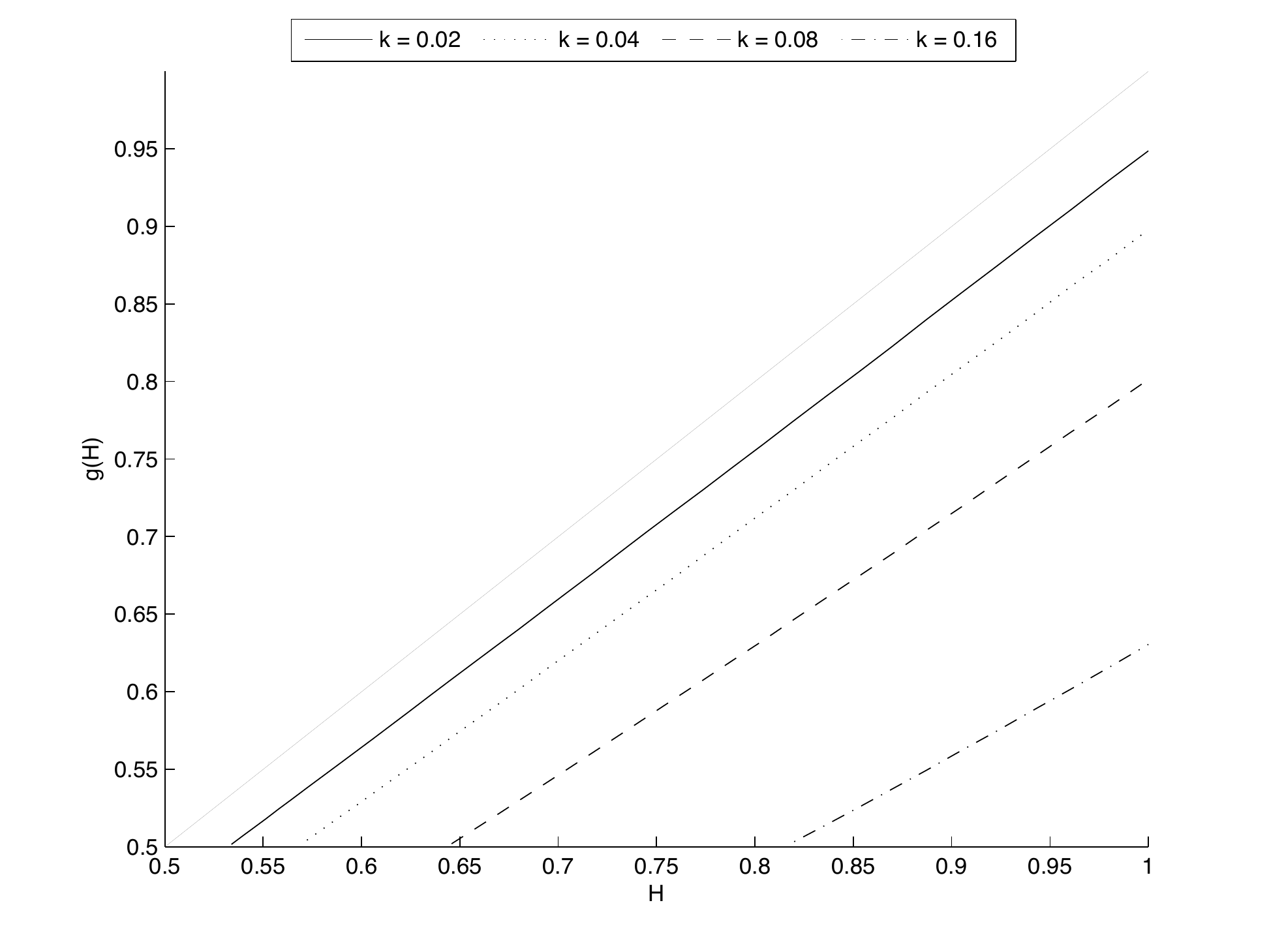}
	\caption{The dissipation function $d$ computed for different viscous damping coefficients. The kick angle $\theta^*$ is set to  $\pi/3$ rad, $\mu_1 = 1$ rad/s, and $\mu_2 = 1$ rad/s.}
	\label{fig:dampingfigure}
\end{figure}

\subsection{Main result} 

In the undamped case, the map \eqref{eq:dampmap} is defined on the domain $[H_-, H_+]$ (see \eqref{eq:defHplusmin}) and has a neutrally stable fixed point at $H_{fp,0} = \mathcal{H}(\theta^*,1/2)$. Furthermore, it has an uncountable family of neutrally stable period-2 cycles , since $f^2 := f \circ f = \mathrm{Id}$. The introduction of damping restricts the domain of the map \eqref{eq:dampmap} to $[H_0, H_+]$, where $H_0 := H_-/r$. The range  of \eqref{eq:dampmap} equals $u([H_0, H_+]) = [H_-, H_+]$, hence a subset of initial conditions will eventually get mapped outside the domain of $u$. The next theorem shows that this subset is, in fact, the entire domain except for one specific point corresponding to the unstable fixed point of $u$.
\begin{theorem} \label{thm:viscousdamp}
	Consider the map \eqref{eq:dampmap} and assume that the dissipation function is linear.
	Furthermore, denote  $r =1 -\delta$, with $\delta>0$ sufficiently small. Then:
	\begin{enumerate}[(i)]
		\item there exists a unique fixed point $H_{fp}(\delta)>H_{fp,0}$ which is  unstable.
		\item the map $u:=f \circ d$, defined by \eqref{eq:dampmap}, has no period 2-cycles.
		\item $\forall H\ne H_{fp}(\delta)$, $\exists n>0$ s.t. 
		$u^n(H) \notin [H_0, H_+]$
	\end{enumerate} 
\end{theorem}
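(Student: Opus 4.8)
The plan is to exploit the fact that in the undamped limit the kick map is an involution. Writing $f^2 := f\circ f = \mathrm{Id}$ (the property already used in the statement), two algebraic reductions drive the whole proof. Applying $f$ to the fixed-point equation $u(H)=H$, i.e. $f(d(H))=H$, gives $d(H)=f(H)$, so $H$ is a fixed point of $u$ \emph{if and only if} $f(H)=rH$. Applying $f$ to $u^2(H)=H$ gives $r\,f(rH)=f(H)$, so the fixed points of $u^2$ --- that is, the fixed points of $u$ together with the genuine period-$2$ points --- are precisely the zeros of $G(H):=f(H)-r\,f(rH)$ on $[H_0,H_+]$. Thus claims (i) and (ii) both reduce to counting zeros of explicit functions built from \eqref{eq:fdef}.

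It is convenient to pass to the momentum coordinate $p=p(H)$ via $H-H_-=\tfrac12(\mu_1/\mu_2)^2 p^2$. In this variable the kick \eqref{eq:fdef} acts simply as $p\mapsto 1-p$ on $[0,1]$, which makes the involution transparent and yields the clean derivative $f'(H)=1-1/p(H)$; hence $f$ is strictly decreasing and convex, and so is $u=f\circ d$ of \eqref{eq:dampmap}. For (i) I would insert \eqref{eq:fdef} into $f(H)=rH$ and square, obtaining a quadratic in $H$ whose leading coefficient is $O(\delta^2)$. For $\delta$ small exactly one root lies in $[H_0,H_+]$; a short expansion shows it equals $\mathcal{H}(\theta^*,1/2)=H_{fp,0}$ at $\delta=0$, is monotonically increasing in $\delta$ (hence $>H_{fp,0}$), and corresponds to a momentum $p_1=\tfrac12+O(\delta)$ slightly above $\tfrac12$. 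Differentiating, $u'(H_{fp})=r\,f'(rH_{fp})=-\,r\,p_1/(1-p_1)$, using that $d$ lowers the momentum from $p_1$ to $1-p_1$ at the fixed point, and a first-order expansion gives $|u'(H_{fp})|=1+O(\delta)>1$, so the fixed point is unstable.

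For (ii) the task is to show that $G$ has $H_{fp}$ as its \emph{only} zero in $[H_0,H_+]$, equivalently that $G$ is strictly monotone there. Evaluating the endpoints, $G(H_0)<0$ (dominated by a term of order $-\sqrt{\delta}$, since $H_0-H_-=O(\delta)$ forces $p(H_0)=O(\sqrt{\delta})$) while $G(H_+)>0$ of order $\delta$, so at least one zero exists; uniqueness is the delicate point. Here lies the main obstacle: $G'(H)=(1-r^2)-1/p(H)+r^2/p(rH)$ mixes terms of competing sign, and $1/p(rH)$ blows up as $H\downarrow H_0$. One must bound $G'$ using the smallness of $\delta$ together with the relation $p(rH)<p(H)$ to conclude $G'>0$ throughout; this is precisely where the ``sufficiently small $\delta$'' hypothesis is genuinely needed, and it turns out to be the same inequality that makes $H_{fp}$ unstable. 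With $G$ strictly increasing, its single zero is $H_{fp}$, so there are no period-$2$ points.

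Finally, (iii) follows from the qualitative structure. Since $u$ is strictly decreasing, $u^2$ is strictly increasing, and by (i)--(ii) its only fixed point in $[H_0,H_+]$ is $H_{fp}$, which is repelling because $(u^2)'(H_{fp})=u'(H_{fp})^2>1$. Between $H_{fp}$ and each boundary the sign of $u^2(H)-H$ is therefore constant, so the even iterates of any $H\neq H_{fp}$ move monotonically away from $H_{fp}$. As the domain is bounded and contains no other fixed point or cycle, the orbit cannot remain inside; and because $u$ maps into $[H_-,H_+]$ with $H_-<H_0$, the only available exit is downward, so after finitely many steps some iterate drops below $H_0$, i.e. $u^n(H)\notin[H_0,H_+]$. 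I expect the zero-count for $G$ to be the one step demanding real care; the remaining arguments are routine bookkeeping with monotone maps.
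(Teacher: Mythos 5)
Your two algebraic reductions via the involution are correct and constitute a genuinely different (and cleaner) route than the paper's: $u(H)=H$ iff $f(H)=rH$, and $u^2(H)=H$ iff $G(H):=f(H)-r\,f(rH)=0$. The paper never introduces $G$; it instead expands $\tfrac{d}{dH}u^2(H)=u'(u(H))\,u'(H)$ about its undamped value $1$ and argues the first-order coefficient in $\delta$ is nonnegative. Your endpoint signs for $G$ are also right. The problem is that your proposal defers exactly the two computations that carry all of the content, and neither goes through on the strength of ``$\delta$ sufficiently small'' alone. Writing $|u'(H_{fp})|=1+O(\delta)>1$ is a non sequitur: the sign of the first-order coefficient is the whole question. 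If you carry out your own plan in the momentum variable, where $u$ becomes $p\mapsto 1-\sqrt{(1-\delta)p^2-2\delta c}$ with $c:=(1-\cos\theta^*)(\mu_2/\mu_1)^2$, you get $p_1=\tfrac12+\bigl(\tfrac18+c\bigr)\delta+O(\delta^2)$ and hence $|u'(H_{fp})|=\tfrac{r\,p_1}{1-p_1}=1+\bigl(4c-\tfrac12\bigr)\delta+O(\delta^2)$, which exceeds $1$ only when $c>1/8$. So the instability in (i) is not automatic; it hinges on a parameter inequality your argument never surfaces.

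The same threshold defeats the monotonicity plan for (ii). Expanding, $G'(H)=\delta\bigl(2-\tfrac{3}{2p}+\tfrac{c}{p^{3}}\bigr)+O(\delta^{2})$, and the bracket, as a function of $x=1/p\ge 1$, has minimum value $2-1/\sqrt{2c}$, which is negative precisely when $c<1/8$; so $G$ fails to be monotone there, and the relation $p(rH)<p(H)$ cannot rescue it. Worse, the obstruction is not merely technical: in momentum form the leading term of $u^2-\mathrm{Id}$ is $\delta(1-2p)\bigl(\tfrac12-\tfrac{c}{p(1-p)}\bigr)$, which for $c<1/8$ has two additional simple zeros where $p(1-p)=2c$, and these persist for small $\delta>0$ as a genuine period-$2$ cycle (numerically, for $c=0.02$ and $\delta=0.01$ the pair $\{0.958,\,0.047\}$ is very nearly $2$-periodic while the fixed point is linearly stable). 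Thus (i)--(iii) actually require $c>1/8$, i.e.\ $8(1-\cos\theta^*)\mu_2^2>\mu_1^2$ --- satisfied by the paper's numerical parameters but not implied by the theorem's hypotheses. You correctly sensed that the $G'$ bound and the instability bound are ``the same inequality''; the gap is that you assume it holds, whereas it is a nontrivial restriction the proof must verify or impose. (Be aware also that the paper's intermediate coefficients, e.g.\ $H_{fp}'(0)=3H_{fp,0}$, do not agree with the direct momentum-space calculation, so do not calibrate your missing step against them.)
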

\begin{proof}
	For notational convenience, let us denote $\eta := \frac{\mu_1}{ \mu_2}$. To show that the fixed point $H_{fp}$ moves to the right and observe that it must belong to the graph of the implicit function:
	\begin{displaymath}
	s(H;\delta) := \delta H - \eta^2 p((1-\delta)H) + \frac{1}{2}\eta^2 = 0
	\end{displaymath}
	Since,
	\begin{eqnarray*}
		\left. \frac{\partial }{\partial \delta} s(H;\delta) \right|_{\delta = 0, H = H_{fp,0}} & = & H_{fp,0} \left(1+ \frac{1}{p(H_{fp,0})}   \right) \\
		& = & 3 H_{fp,0} \\
		& > & 0 
	\end{eqnarray*}
	the implicit function theorem guarantees that we may express $H_{fp} = H_{fp}(\delta)$, such that $s(H_{fp}(\delta); \delta) = 0$ for a sufficiently small neighborhood around $\delta = 0$. Furthermore,
	\begin{displaymath}
	H_{fp}(\delta) = H_{fp,0} + 3 H_{fp,0} \delta + \mathcal{O}(|\delta|^2)
	\end{displaymath}
	which shows that the fixed point moves to the right for $\delta>0$. The fixed point $H_{fp}(\delta)$, $\delta>0$ is unstable. To show this, take note that $u(H) = u(H;\delta)$, and:
	\begin{displaymath}
	u'(H;\delta) = (1-\delta) \left( 1 - \frac{1}{p((1-\delta)H)} \right)
	\end{displaymath}
	Consider $u'(H_{fp}(\delta);\delta)$, the fixed point is unstable if, and only if, $| u'(H_{fp}(\delta);\delta)| > 1$. Given that $u'(H_{fp}(0);0) = -1$, we need to show the following:
	\begin{displaymath}
	u'(H_{fp}(\delta);\delta) < u'(H_{fp}(0);0) = -1
	\end{displaymath}
	Since, 
	\begin{eqnarray*}
		\left. \frac{d}{d\delta} u' (H_{fp}(\delta), \delta) \right|_{\delta = 0} & = & \frac{8}{\eta^2} \left( \cos \theta^* - 1\right) \\
		& < & 0,  \quad \mbox{if }\theta^* \ne 0  
	\end{eqnarray*}
	this is indeed the case, which completes the proof for (i).
	
	To prove (ii), consider $u^2(H):= u \circ u(H)$, we will show that for a sufficiently small $\delta>0$,
	$$
	\frac{d}{dH} u^2(H) = u'(u(H)) u'(H) \geq 1 
	$$
	Since $u^2(H_{fp}) = H_{fp}$ with $\frac{d}{dH} u^2(H_{fp})>1$, the above inequality  would imply that $u^2(H)$ has no other fixed points, which proves the non-existence of period-2 cycles. To show \eqref{eq:inequality} holds true, observe at first that $u'(H) = u'(H;\delta)$ and that $u'(u(H)) = u'(u(H;\delta);\delta)$. Furthermore, 
	\begin{eqnarray*}
		u'(H;\delta) & = & u'(H;0) + \left. \frac{\partial}{\partial \delta} u'(H;\delta) 	\right|_{\delta = 0} \delta + \mathcal{O}(|\delta|^2) \\
		u'(u(H;\delta);\delta) & = &  \frac{1}{u'(H;0)} + \left.  \frac{\partial}{\partial \delta}  u'(u(H;\delta);\delta)	\right|_{\delta = 0} \delta  + \mathcal{O}(|\delta|^2)
	\end{eqnarray*}
	So that for $\delta>0$. sufficiently small, we have:
	$$
	\frac{d}{dH} u^2(H) = 1 +   \left( \left. \frac{\partial}{\partial \delta} u'(H;\delta) 	\right|_{\delta = 0} \delta  \right) \left(  \frac{1}{u'(H;0)} \right)  + \left(  \left.  \frac{\partial}{\partial \delta}  u'(u(H;\delta);\delta)	\right|_{\delta = 0} \delta \right)  u'(H;0) + \mathcal{O}(|\delta|^2)
	$$
	Hence, if it can be shown that:
	$$
	k(H) :=  \left. \frac{\partial}{\partial \delta} u'(H;\delta) 	\right|_{\delta = 0} \cdot  \frac{1}{u'(H;0)}   + \left.  \frac{\partial}{\partial \delta}  u'(u(H;\delta);\delta)	\right|_{\delta = 0}  u'(H;0) \geq 0
	$$
	then $\frac{d}{dH}u^2(H)\geq1$. And indeed, 
	\begin{eqnarray*}
		\left. \frac{\partial}{\partial \delta} u'(H;\delta) 	\right|_{\delta = 0}  & = & - u'(H;0) - \frac{\eta H}{(\eta p(H))^3} \\
		\left.  \frac{\partial}{\partial \delta}  u'(u(H;\delta);\delta)	\right|_{\delta = 0} & = & -\frac{1}{u'(H;0)}  - \frac{\eta u(H;0) }{(\eta (1- p(H)))^3} 
	\end{eqnarray*}
	and after some algebraic manipulations, we obtain:
	\begin{eqnarray*}
		k(H) & = &-2 + \frac{\frac{1}{2}\eta^2  p(H) (1-p(H)) + 1 - \cos \theta^* }{\eta^2 p(H)^2 (1-p(H))^2} \\
		& \geq & -2  + \frac{1}{2} \frac{  1}{ p(H) (1-p(H))} \\
		& \geq & -2 + \frac{1}{2}\displaystyle \min_{0<p<1} \frac{  1}{ p (1-p)} \\
		& = & 0
	\end{eqnarray*}
	which completes the proof for (ii).
	
	The proof of (iii) follows directly from $\frac{d}{dH}u^2(H)\geq1$.
\end{proof}
\begin{corollary} \label{cor:damped}
	For $\delta>0$ sufficiently small, the trajectories of the hybrid pendulum, starting from almost everywhere, asymptotically reach the fixed at $\theta = 0$.
\end{corollary}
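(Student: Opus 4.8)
The plan is to reduce the full two-dimensional hybrid flow to the one-dimensional energy map $u$ of \eqref{eq:dampmap} and then read off the asymptotics from Theorem~\ref{thm:viscousdamp}. First I would dispose of the easy region: any trajectory with $\mathcal{H}(\theta,p) < H_-$ lies in $\mathcal{A}_1$, where the pendulum never reaches the kicking surfaces and is therefore a purely viscously-damped pendulum; such trajectories spiral into the stable equilibrium $(\theta,p)=(0,0)$, as already noted in Section~\ref{sec:geomdamp}. Moreover, because $\mathcal{H}$ is non-increasing along the continuous flow, once a trajectory enters $\mathcal{A}_1$ it remains there, so it suffices to show that almost every trajectory of the kicked region $\{\mathcal{H}\geq H_-\}$ eventually enters $\mathcal{A}_1$.

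For the kicked region I would use the fact (stated in Section~\ref{sec:geomdamp}) that every such trajectory eventually enters $\mathcal{A}_2$, together with the reduction of the dynamics on $\mathcal{A}_2$ to the scalar map $u$ acting on the energy $H$ recorded at successive crossings of the kicking surfaces $\pm\theta^*$. The crucial observation is the meaning of ``leaving the domain'' $[H_0,H_+]$ of $u$: if the recorded energy satisfies $H<H_0=H_-/r$, then after the dissipation step the energy $d(H)=rH<H_-$, so the trajectory drops below the level $H_-$ \emph{before} it can reach the next kicking surface; it therefore fails to be kicked and is confined to $\mathcal{A}_1$ thereafter. Conversely, as long as $H\in[H_0,H_+]$ the post-dissipation energy $rH\geq H_-$ keeps the swing above $H_-$, so the map $u$ faithfully tracks the trajectory. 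Consequently, the event ``$u^n(H)\notin[H_0,H_+]$'' is exactly the event ``the trajectory has entered $\mathcal{A}_1$.''

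With this dictionary in hand the corollary becomes a direct consequence of Theorem~\ref{thm:viscousdamp}(iii): for every energy $H\neq H_{fp}(\delta)$ there is an $n$ with $u^n(H)\notin[H_0,H_+]$, hence the corresponding phase-space trajectory enters $\mathcal{A}_1$ and spirals to $(0,0)$. It remains to collect the exceptional initial conditions into a null set. These are (a) the measure-zero set of points on the homoclinic orbit that never get kicked, already identified in Section~\ref{sec:mapping}, and (b) the points whose recorded energy equals $H_{fp}(\delta)$, i.e. the single unstable discontinuous periodic orbit associated with the fixed point of $u$ (together with its at most countable set of pre-image energies under $u$). Each such energy value corresponds to a one-dimensional level curve $\{\mathcal{H}=\mathrm{const}\}\cap\mathcal{A}_2$, so the union is a countable family of measure-zero curves and hence negligible. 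Therefore almost every trajectory reaches the fixed point at $\theta=0$.

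The main obstacle is not analytic --- Theorem~\ref{thm:viscousdamp} has already done the quantitative work --- but lies in making the reduction rigorous: one must verify that the energy $H$ at the kicking surfaces is a complete invariant for the inter-kick orbits (so that the scalar map $u$ genuinely captures the planar dynamics), and that ``exiting $[H_0,H_+]$'' is equivalent to permanent confinement in $\mathcal{A}_1$ rather than a transient excursion. The monotonic decrease of $\mathcal{H}$ under damping is what makes both the confinement and the measure-zero bookkeeping clean, and I expect the only delicate point to be arguing that the non-escaping set reduces to the single periodic orbit (plus its pre-images), so that it is genuinely of Lebesgue measure zero in $X$.
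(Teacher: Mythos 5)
Your argument is correct and is precisely the reasoning the paper intends: the corollary is stated without proof as an immediate consequence of Theorem~\ref{thm:viscousdamp}(iii) together with the observations in Section~\ref{sec:geomdamp} that exiting the domain $[H_0,H_+]$ of $u$ (necessarily through the bottom, since the range of $u$ is $[H_-,H_+]$) means the trajectory can no longer reach a kicking surface and hence spirals into the origin as an ordinary damped pendulum. Your explicit bookkeeping of the exceptional null set (the pre-images of the fixed-point energy $H_{fp}(\delta)$ and the never-kicked separatrix points) fills in details the paper leaves implicit, and is sound.
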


\section{The damped hybrid pendulum: Koopman analysis} \label{sec:dampedkoopman}

In this section, the damped hybrid pendulum is analyzed from the Koopman operator theory perspective.

\subsection{Eigenspace of Koopman at $\lambda=0$} \label{sec:eigzerodamp}

From corollary~\ref{cor:damped}, it follows that almost all trajectories end up at the stable equilibrium of the pendulum as time approaches infinity. This has specific implications to the eigenspace at $\lambda=0$ if one restrict the Koopman operator to those functions\footnote{Note that for the damped hybrid pendulum, this space of functions is an invariant subspace of the operator.} which are continuous at $(\theta,p) =(0,0)$
The introduction of damping, in that case, would severely simplify the eigenspace at $\lambda=0$, given that the only permissible eigenfunctions are now those which are constant almost everywhere in $X$.

\subsection{Point spectrum of the operator} \label{sec:disseig}

\begin{figure*}[t!]
	\centering
	\subfigure[The modulus $|\phi_{\lambda}(\boldsymbol{x})|$]{\includegraphics[width=.45 \textwidth]{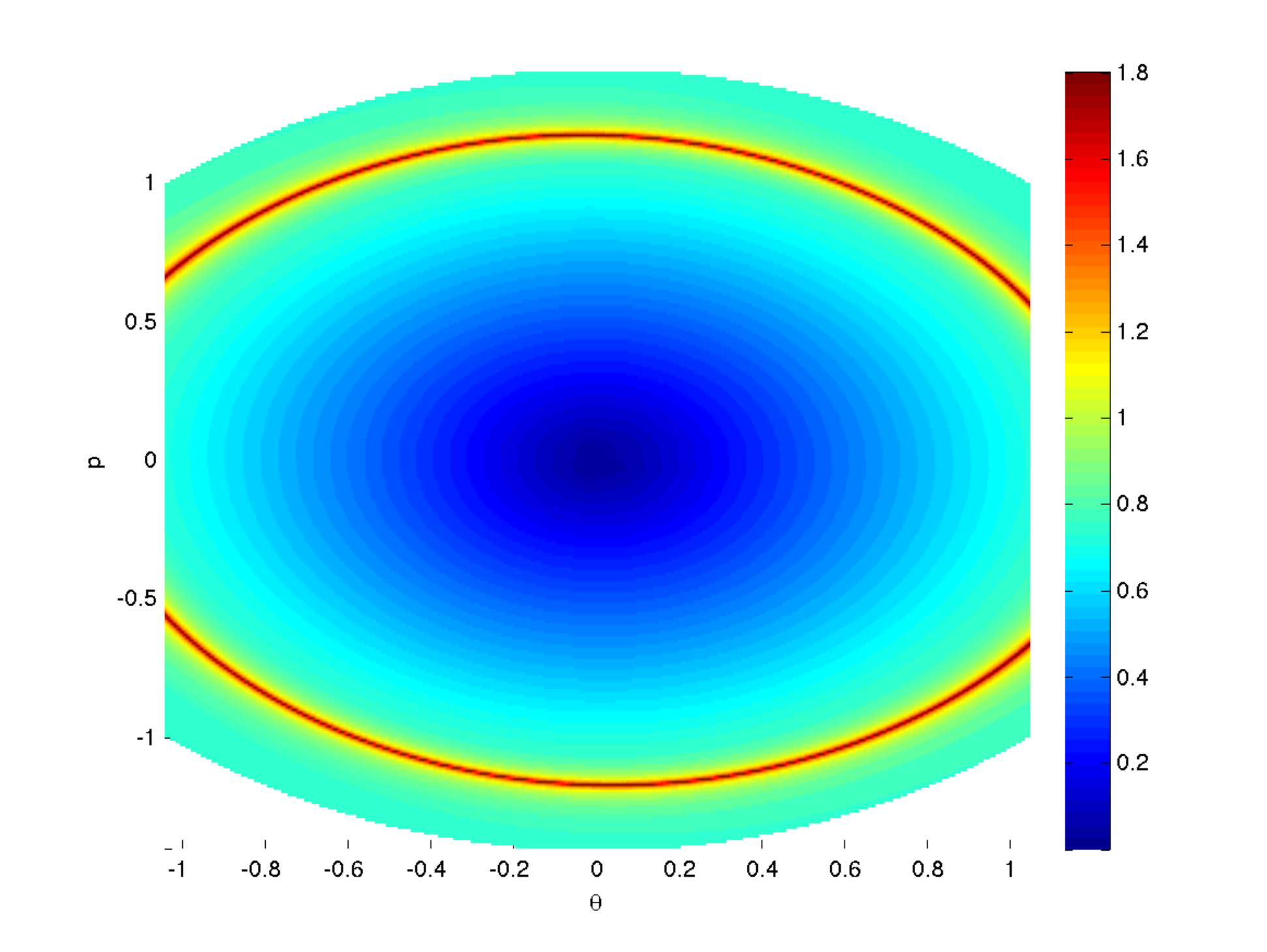} \includegraphics[width=.55 \textwidth]{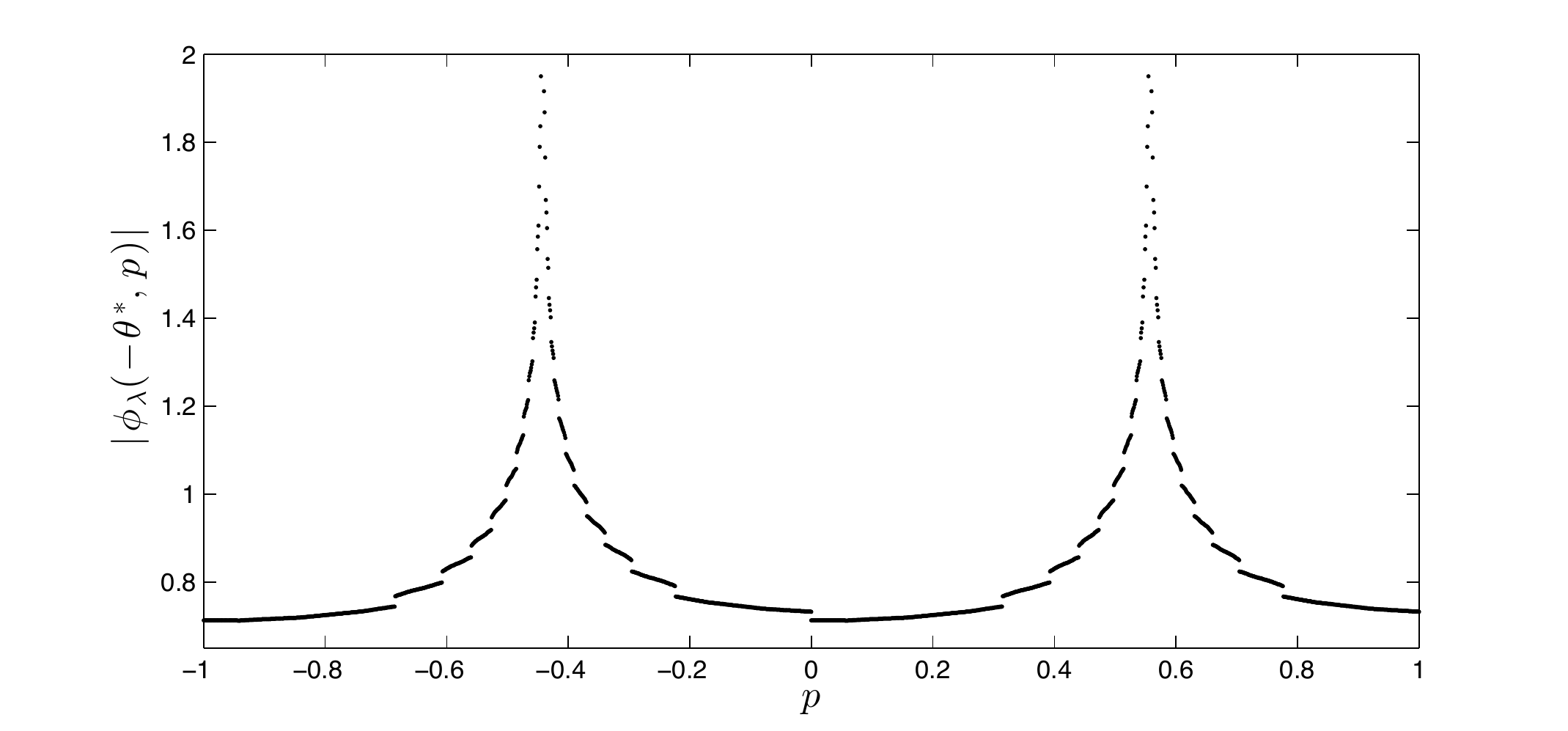} }  \label{modphi} \\
	\subfigure[The phase $\angle \phi_{\lambda}(\boldsymbol{x})$]{\includegraphics[width=.45 \textwidth]{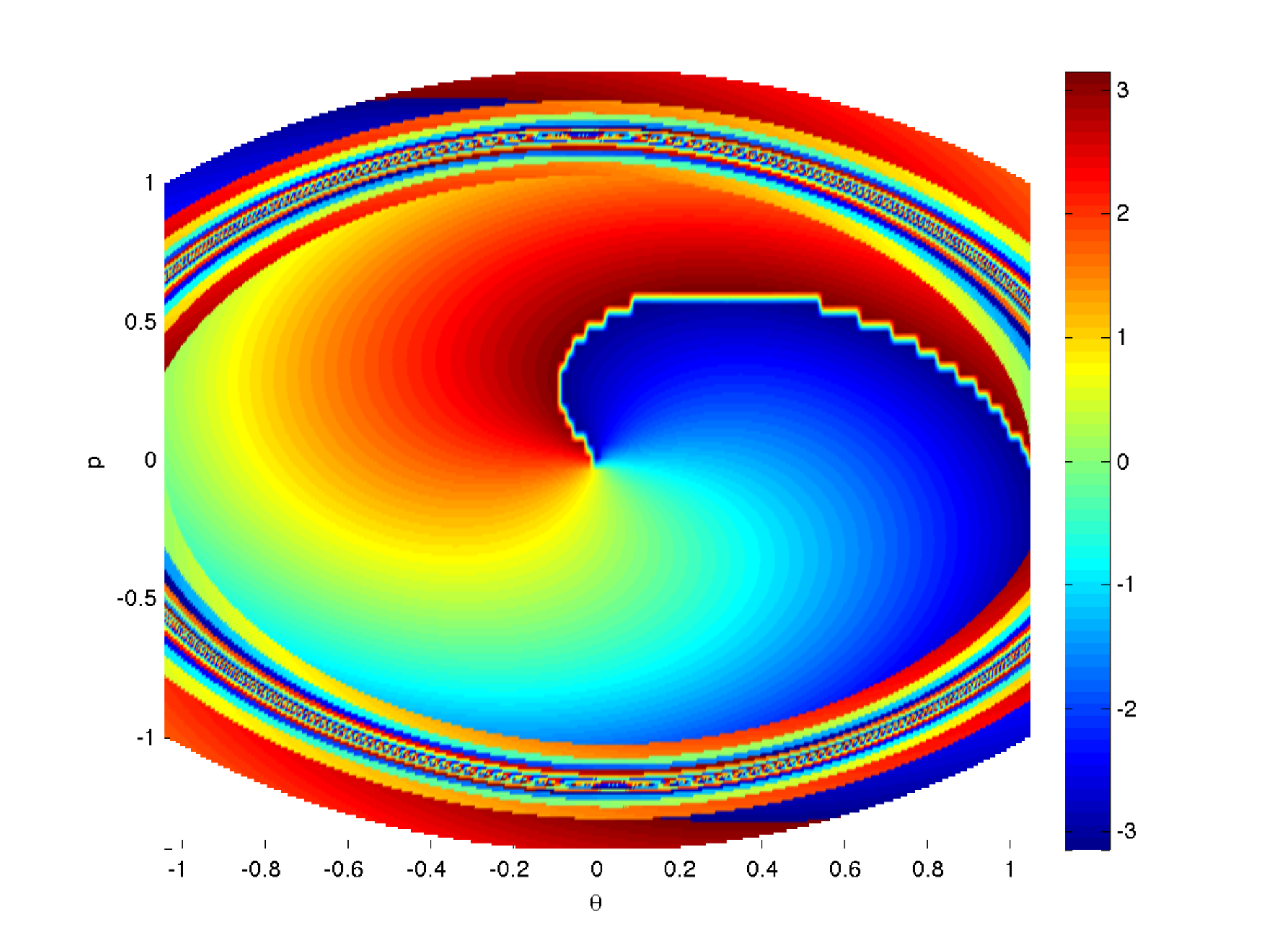} \includegraphics[width=.55 \textwidth]{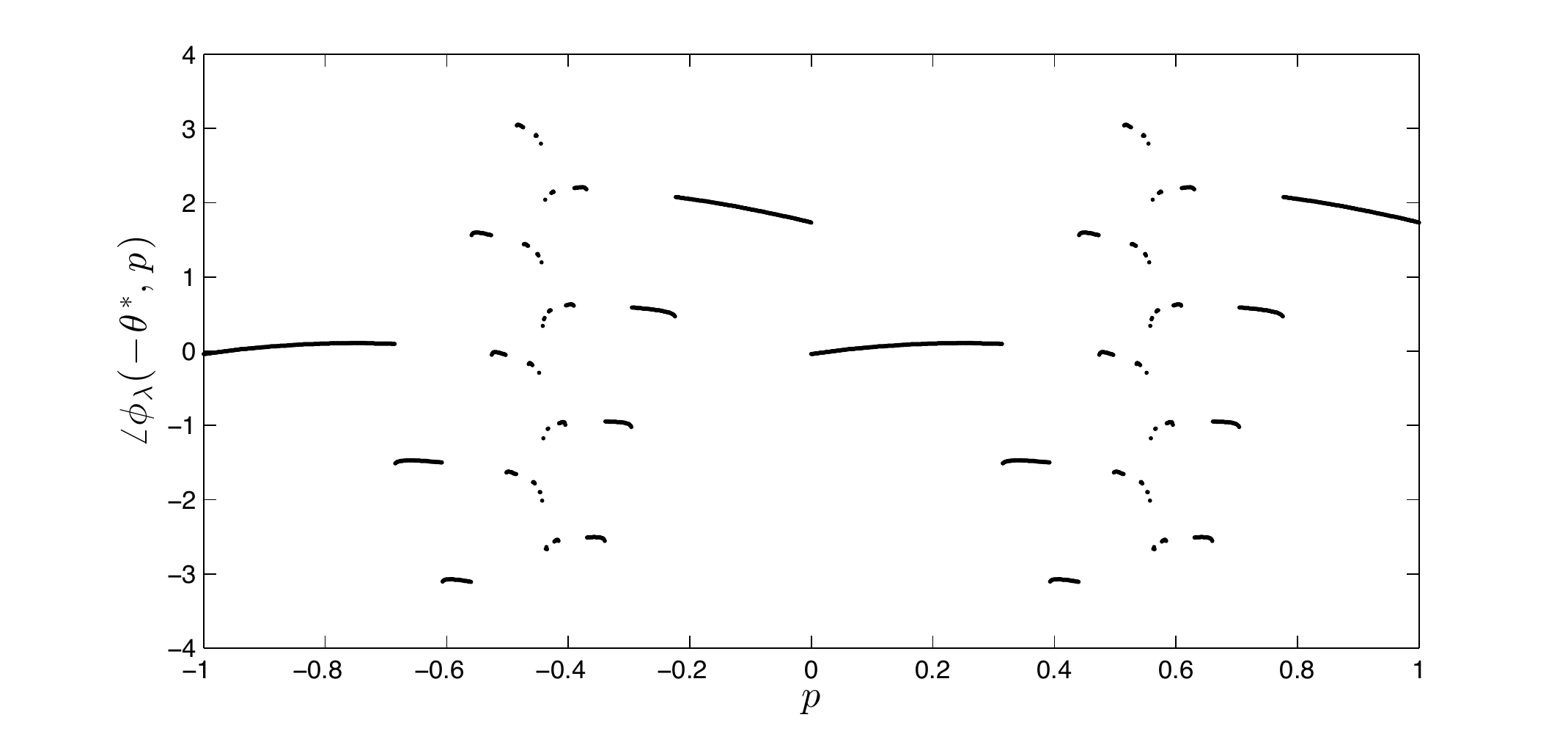}  } 
	\caption{The Koopman eigenfunction $\phi_{\lambda}(\boldsymbol{x})$ of theorem~\ref{thm:dampeig} shown for the regions \eqref{eq:simple} and \eqref{eq:attractingset}. The figures on the right display the eigenfunction at a cut: $\theta = -\theta^*$, $p\in(-1,1)$. The viscous damping coefficient is set to $k= 0.03$.   }  \label{fig:dampedeig}
\end{figure*}

The addition of viscous damping turns the fixed point at $\theta = 0$ into a spiral sink. In terms of the Koopman operator, these changes give rise to point spectrum in the left-half complex plane. The point spectra are products of the eigenvalues\footnote{This follows from the property $ \Koop g_1 g_2 = \left( \Koop g_1 \right)  \left( \Koop g_2 \right)$, see  \cite{Budisic2012} for details. } of the linearized pendulum:
\begin{equation}
\boldsymbol{\dot{y}} = \mathrm{A} \boldsymbol{y}, \quad \mathrm{A} = \begin{bmatrix} 0 & 1 \\ -1 & -k \end{bmatrix} \label{eq:linsys}
\end{equation} 
where $ \boldsymbol{y} = \begin{bmatrix} \theta & p \end{bmatrix}^T$ and whose eigenvalues are given by $\lambda / \bar{\lambda} = -\sigma \pm i \eta$, with $\sigma = \frac{1}{2}k$, $\eta = \sqrt{1- \frac{1}{4} k^2}$. 

Following the concepts discussed in \cite{Lan2013,Mauroy2013}, one can show that an eigenfunction at $\lambda $ and $\bar{\lambda}$ can be computed from the observables:
\begin{eqnarray}  g_1(\theta,p) & := &  \frac{1}{\sqrt{2}} \left \Vert \begin{bmatrix}\boldsymbol{v}& \boldsymbol{\bar{v}} \end{bmatrix}^{-1}\begin{bmatrix} \theta \\ p \end{bmatrix} \right \Vert_{2}, \\
g_2(\boldsymbol{y}) & := &  \frac{\begin{bmatrix}1 & 0 \end{bmatrix}\begin{bmatrix}\boldsymbol{v}& \boldsymbol{\bar{v}} \end{bmatrix}^{-1} \begin{bmatrix} \theta \\ p \end{bmatrix} }{g_1(\theta, p)}
\end{eqnarray}
where $\boldsymbol{v}$, $\boldsymbol{\bar{v}}$ are the right eigenvectors of $\mathrm{A}$. 

\begin{theorem} \label{thm:dampeig} Consider the damped hybrid pendulum defined by \eqref{eq:quardnorm}, \eqref{eq:penddamped}. Then,
	\begin{equation} \phi_{\lambda / \bar{\lambda} }(\theta, p) = | \phi_{\lambda}(p,\theta) | e^{\pm i \angle \phi_{\lambda}(\theta,p) } 
	\end{equation}	
	with:
	\begin{subequations}
		\begin{eqnarray}
		| \phi_{\lambda}(\theta,p) | & := & \displaystyle \lim_{t\rightarrow \infty} \frac{1}{t} \int^{t}_{0}  e^{\sigma \tau} \left[\Koop^{\tau}g_1\right](\theta,p) d\tau \\
		e^{\pm i \angle \phi_{\lambda}(\theta, p) }  & := &  \displaystyle \lim_{t\rightarrow \infty} \frac{1}{t} \int^{t}_{0}  e^{\pm i \eta \tau} \left[\Koop^{\tau}g_2\right](\theta,p) d\tau\mbox{  } 
		\end{eqnarray} \label{eq:dampeigint}
	\end{subequations}
	are Koopman eigenfunctions at eigenvalues $\lambda / \bar{\lambda} = -\sigma \pm i \eta$.
\end{theorem}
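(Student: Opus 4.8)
The plan is to exploit two facts already in the paper: a convergent Laplace average \eqref{eq:laplace} is automatically a Koopman eigenfunction, and eigenfunctions multiply, $\Koop^{t}(g_1 g_2) = (\Koop^{t} g_1)(\Koop^{t} g_2)$. Writing $\lambda = -\sigma + i\eta$, I would split the sought complex eigenfunction into a modulus factor carrying the real (decay) part $-\sigma$ and a unit-modulus phase factor carrying the imaginary (oscillatory) part. Concretely, I would show separately that the first limit in \eqref{eq:dampeigint} produces an eigenfunction $|\phi_{\lambda}|$ at the real eigenvalue $-\sigma$, that the second limit produces a unit-modulus eigenfunction $e^{\pm i\angle\phi_{\lambda}}$ at a purely imaginary eigenvalue, and then multiply them; by the product rule the result is an eigenfunction whose eigenvalue is the sum of the real part $-\sigma$ and a purely imaginary part, i.e. $\lambda$ or $\bar\lambda$ (the signs and the labeling of the eigenvector $\boldsymbol{v}$ being fixed so the product lands on the stated eigenvalue).

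For the eigenfunction property itself, I would note that both limits in \eqref{eq:dampeigint} are Laplace averages in the sense of \eqref{eq:laplace}: the modulus limit is $\mathcal{P}^{-\sigma} g_1$ (weight $e^{\sigma\tau} = e^{-(-\sigma)\tau}$) and the phase limit is $\mathcal{P}^{\mp i\eta} g_2$ (weight $e^{\pm i\eta\tau}$). Granting convergence, the eigenfunction property is exactly the substitution argument referenced after \eqref{eq:laplace}: applying $\Koop^{s}$, changing the integration variable $\tau \mapsto \tau + s$, and using that the boundary terms vanish under the $1/t$ prefactor gives $\Koop^{s}\mathcal{P}^{\mu} g = e^{\mu s}\mathcal{P}^{\mu} g$ for each of $\mu \in \{-\sigma,\ \mp i\eta\}$.

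The substantive step is convergence of the two improper time averages almost everywhere, and here I would invoke Corollary~\ref{cor:damped}. Because almost every trajectory decays to the stable equilibrium and energy is strictly dissipated, after a finite time $T_0(\boldsymbol{x})$ the orbit drops permanently below the kicking threshold, enters the region $\mathcal{A}_1$ of \eqref{eq:simple}, and from then on evolves under the smooth damped flow with no further resets, governed asymptotically by the linearization \eqref{eq:linsys}. In the eigencoordinate $z_1 := \begin{bmatrix}1 & 0\end{bmatrix}\begin{bmatrix}\boldsymbol{v} & \boldsymbol{\bar{v}}\end{bmatrix}^{-1}\begin{bmatrix}\theta & p\end{bmatrix}^{T}$ one has $g_1 = |z_1|$ and $g_2 = z_1/|z_1|$, defined almost everywhere (off the measure-zero set $z_1 = 0$). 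Following the generalized Laplace analysis of \cite{Lan2013, Mauroy2013}, I would show that along the smooth tail the de-scaled modulus $e^{\sigma\tau}|z_1(\boldsymbol{S}^{\tau}\boldsymbol{x})|$ and the de-rotated phase $e^{\pm i\eta\tau} z_1(\boldsymbol{S}^{\tau}\boldsymbol{x})/|z_1(\boldsymbol{S}^{\tau}\boldsymbol{x})|$ each converge to the respective limits in \eqref{eq:dampeigint}. The transient $[0, T_0(\boldsymbol{x})]$ is a fixed finite integral, hence killed by the $1/t$ factor, and the tail is then a Cesàro mean of a convergent integrand, so it converges to the same limit.

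The main obstacle is precisely this tail convergence together with the non-resonance (spectral-gap) condition underlying generalized Laplace analysis: one must check that $g_1$ and $g_2$ have a nonvanishing projection onto the principal eigenfunctions (so the averages are not identically zero) and that no faster mode corrupts the stated limits, while also verifying that the discontinuous kicks perturb only the finite transient. I would settle these points by appealing to the principal-eigenfunction construction near a spiral sink in \cite{Lan2013, Mauroy2013} and checking nonresonance of $-\sigma \pm i\eta$ for the relevant damping $k$. The final identity of Theorem~\ref{thm:dampeig} then follows by multiplying the modulus and phase eigenfunctions.
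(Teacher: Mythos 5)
Your proposal follows essentially the same route as the paper's own proof: reduce the claim to convergence of the Laplace averages (which are automatically eigenfunctions by the substitution argument after \eqref{eq:laplace}), establish convergence on $\mathcal{A}_1$ via the linearizing diffeomorphism of \cite{Lan2013}, and handle all other initial conditions by invoking Theorem~\ref{thm:viscousdamp} and Corollary~\ref{cor:damped} to get finite-time entry into $\mathcal{A}_1$, with the transient killed by the $1/t$ prefactor. Your additional remarks on the modulus/phase product decomposition and the non-resonance caveat are elaborations of details the paper leaves implicit, not a different argument.
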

\begin{proof}
	The claim follows by showing that the integrals (33) converge (to a non-zero value) for all initial conditions on the basin. If $(\theta, p) \in \mathcal{A}_1$, the dynamics are identical to that of the conventional pendulum, and therefore by Theorem 2.3 in \cite{Lan2013},  there exists a $C^1$-diffeomorphism $h:\mathcal{A}_1 \mapsto Y \subset \mathbb{R}^2$ between the flows $\mathbf{S}^t$ and $\mathbf{R}^t$. For all other initial conditions on the basin, we infer from theorem 2 and corollary 1  that there exists a $T^*>0$ such that:
	$$S^t(\theta, p) \in \mathcal{A}_1, \quad \forall t> T^*$$
	A change of variables may be used to prove convergence of the integrals in that case.
\end{proof}

Figure~\ref{fig:dampedeig} shows a contour plot of the eigenfunction in theorem~\ref{thm:dampeig}. The functions $| \phi_{\lambda}(\theta,p) |$ and $e^{i \angle \phi_{\lambda}(\theta, p) }$ have the following geometric interpretation. The level sets of $| \phi_{\lambda}(\theta,p) |$ define the so-called isostables \cite{Mauroy2013} and describe the set of points that have the same asymptotic convergence toward the fixed point. We see particularly that the isostables blow up in the region that corresponds to the unstable periodic orbit (associated with the fixed point in theorem~\ref{thm:viscousdamp}).  The level sets of $e^{i \angle \phi_{\lambda}(\theta,p) }$ (or equivalently those of $\angle \phi_{\lambda}((\theta,p)$), on the other hand,  describe the set of points that simultaneously move in phase around the fixed point.

A close examination of the contour plots suggests that merging of trajectories indeed occur for certain initial conditions in \eqref{eq:attractingset}. Additionally, the phase plots indicate that the kicking of the pendulum introduces a high level of phase sensitivity \cite{Mauroy2015} close to the unstable periodic orbit.   

Overall, the eigenfunctions of theorem~\ref{thm:dampeig} can be used to describe a \emph{semi-conjugacy} with a linear system. Specifically, the modulus and phase form a map $(\theta,p) \mapsto (| \phi_{\lambda}(\theta,p) |, \angle \phi_{\lambda}(\theta,p) )$, such that under the new coordinates we have the simplified dynamics:
\begin{eqnarray*}
	\frac{d}{dt} | \phi_{\lambda}(\theta,p) | & = & -\sigma  | \phi_{\lambda}(\theta,p) |  \\
	\frac{d}{dt} \angle \phi_{\lambda}(\theta,p)  & = & \eta 
\end{eqnarray*}

\section{Conclusions}
\label{sec:conclusions}
The spectral properties of the Koopman operator are closely related to the geometric properties of the state-space, and in this paper, we have discussed in detail how these relationships exactly manifest for the hybrid pendulum. The connections between level sets of Koopman eigenfunctions and the corresponding flow field were useful for visualizing certain geometric properties of the state-space. In the undamped case, the ergodic partition, obtained by projecting of observables onto the eigenspace at zero, yielded a method to visualize the basins of attraction of the limit cycles. In the damped case, the eigenfunctions associated with the spectra in the left-half complex-plane, provided a set of coordinates to establish a semi-conjugacy with a linear system.

\bibliographystyle{plain}
\bibliography{koopmanbib2015}

\end{document}